\newcommand{\suchthat}{\;\ifnum\currentgrouptype=16 \middle\fi|\;}
\newcounter{TmpEnumi}
\theoremstyle{definition}
\newtheorem{thm}{Theorem}[section]
\newtheorem{lem}[thm]{Lemma}
\newtheorem{prp}[thm]{Proposition}
\newtheorem{dfn}[thm]{Definition}
\newtheorem{ctn}[thm]{Construction}
\newtheorem{rmk}[thm]{Remark}
\newtheorem{ntn}[thm]{Notation}
\newtheorem{qst}[thm]{Question}
\newcommand{\rc}{{\operatorname{rc}}}
\newcommand{\Ad}{\operatorname{Ad}}
\newcommand{\bN}{{\mathbb{N}}}
\newcommand{\bZ}{{\mathbb{Z}}}
\newcommand{\id}{\operatorname{id}}
\newcommand{\Aut}{{\operatorname{Aut}}}
\newcommand{\dirlim}{\varinjlim}
\numberwithin{equation}{section}
\newcommand\numeq[1]%
\title[The radius of comparison]{The radius of comparison for actions of 
$\mathbb{Z}^d$ on simple AH algebras}
\author{M. Ali Asadi-Vasfi}
\address{Department of Mathematics, University of Toronto and The Fields 
Institute.}
\author{Ilan Hirshberg}
\address{Department of Mathematics, Ben-Gurion University of the Negev.}
\author{Apurva Seth}
\address{Department of Mathematics, Ben-Gurion University of the Negev and 
Mathematical Institute, University of Oxford.}
\thanks{The second and third authors were partially supported by BSF. The third 
author was also partially supported by the Engineering and Physical Sciences 
Research Council (EP/X026647/1). For the purpose of open access, the authors 
have 
applied a CC BY public copyright license to any author-accepted manuscript 
arising from this submission.}
\begin{document}
\maketitle

\begin{abstract}
Given  $0\leq r' \leq r\leq \infty$, and $d\in \bN$, we construct a simple unital 
AH algebra $A$ with stable rank one, and a pointwise outer action $\alpha 
\colon 
\bZ^d\to\text{Aut}(A)$ such that $\rc (A)=r$ and $\rc 
\left(A\rtimes_{\alpha}\bZ^d\right)=r'$. 
\end{abstract}
\section{Introduction}
The radius of comparison is a numerical invariant for unital stably finite 
C*-algebras 
introduced by Toms in \cite{Tom06} as a means of measuring the extent to which 
strict comparison fails. This invariant was used to construct a 
family of pairwise non-isomorphic simple AH algebras with the same Elliott 
invariant. Although considerable work has focused on identifying conditions 
under which the regularity properties required for Elliott classification are 
preserved under natural constructions, little is known about how the radius of 
comparison behaves under these constructions. With the Elliott 
program now largely complete, it is natural to go beyond classifiable cases 
and gain a better understanding of the radius of comparison as an invariant 
rather than merely as a tool for distinguishing examples.

In \cite{hirshberg2024exotic}, the second author constructed an action of 
the circle group \(\mathbb{T}\) on a simple separable unital stably finite 
\(\mathcal{Z}\)-stable nuclear C*-algebra \(A\), such that the crossed 
product \(A \rtimes \mathbb{T}\) is again simple but not 
\(\mathcal{Z}\)-stable. This construction arises as the dual of an action 
of the 
integers on a simple non-\(\mathcal{Z}\)-stable AH algebra with a positive 
radius of comparison;  in that case, the crossed product by \(\mathbb{Z}\) 
becomes 
\(\mathcal{Z}\)-stable. This result raises questions about how the 
radius of comparison behaves under discrete group actions. In the setting of  
simple unital C*-algebras, the results in the literature so far provide bounds 
for the radius of comparison of crossed products by finite group actions, 
assuming the actions in question have either the weak tracial Rokhlin property 
(\cite{AGP19}), or a weak form of approximate representability (\cite{ASV23}).

In this paper, we construct examples demonstrating that the radius of 
comparison can 
decrease to any intermediate value between \(0\) and the original value. More 
precisely, we prove the following theorem.
\begin{thm}\label{thm:main}
	For any pair of extended real numbers \(0 \leq r' \leq r \leq \infty\) and 
	for any $d \in \bN$ there exists a simple AH algebra \(A\) with satble rank 
	one and a pointwise outer action \(\alpha : \mathbb{Z}^d 
	\to \text{Aut}(A)\), such that \(\text{rc}(A) = r\) and \(\text{rc}(A 
	\rtimes_{\alpha} \mathbb{Z}^d) = r'\). 
\end{thm}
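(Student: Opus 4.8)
The plan is to realize both $A$ and its crossed product as Villadsen-type inductive limits and to read off the radius of comparison from the asymptotic ratio between the covering dimension of the building-block spaces and the matrix sizes. Concretely, I would build $A = \dirlim\bigl(C(X_n)\otimes M_{k_n}, \phi_n\bigr)$, where each $X_n$ is a finite product of spheres (or of complex projective spaces), the connecting maps $\phi_n$ are diagonal $*$-homomorphisms combining a controlled number of coordinate-projection maps with point evaluations, and the ratios $\dim(X_n)/k_n$ are tuned so that $\rc(A)$ equals the prescribed value $r$. Simplicity and stable rank one of $A$ should then be standard consequences of choosing the $\phi_n$ injective, of large multiplicity, and with densely mixed point-evaluation data; the value $\rc(A)=r$ would follow from the usual two-sided estimate, namely an upper bound from explicit Cuntz comparisons carried out inside the building blocks and a matching lower bound from a non-vanishing characteristic-class (Chern-class) obstruction that forbids a Murray--von Neumann subequivalence whose rank defect is smaller than $r$.

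Next I would define $\alpha\colon \bZ^d \to \Aut(A)$ as an inductive limit of automorphisms $\alpha_n$ of the building blocks that are compatible with the $\phi_n$, chosen pointwise outer. The central design requirement is that $\alpha$ act so that $A\rtimes_\alpha \bZ^d$ itself admits an AH-type (or at least sufficiently tractable local) inductive-limit description of the same flavour, but with a modified dimension-to-rank ratio producing the value $r'$. The mechanism for \emph{decreasing} the radius of comparison is that the crossed product spreads each building-block space across the orbit directions of $\bZ^d$, effectively enlarging the matrix amplification relative to the surviving topological dimension; by calibrating how much of the comparison-obstructing topology is absorbed into this amplification one interpolates continuously between $r'=r$ (no absorption) and $r'=0$ (full absorption, as in the $\cZ$-stable example of \cite{hirshberg2024exotic}). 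Pointwise outerness of $\alpha$, together with simplicity of $A$, then yields simplicity of $A\rtimes_\alpha \bZ^d$ by a Kishimoto-type argument, after which $\rc(A\rtimes_\alpha \bZ^d)=r'$ is obtained by the same two-sided strategy applied to the crossed-product presentation: an upper bound from the comparisons available after amplification, and a lower bound from a surviving characteristic-class obstruction of the calibrated size.

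The hard part, I expect, is the exactness of the crossed-product computation rather than the construction of $A$ itself. One must describe the Cuntz semigroup of $A\rtimes_\alpha \bZ^d$ precisely enough to certify simultaneously that comparison holds up to $r'$ and fails below it. This demands, on the one hand, a genuine structural identification of the crossed product so that the upper bound is actually attained, and on the other hand the explicit construction of a positive element together with a trace whose induced dimension function witnesses the gap $r'$. The delicate point is to guarantee that the topological obstruction responsible for $r$ in $A$ is reduced to \emph{exactly} $r'$ under passage to the crossed product, rather than collapsing all the way to $0$ or surviving at some uncontrolled intermediate value; this is where the calibration of the action $\alpha$ against the dimension growth of the $X_n$ must be carried out with care.
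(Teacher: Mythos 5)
Your construction of $A$ itself matches the paper's: products of spheres, diagonal connecting maps built from coordinate projections plus point evaluations, upper bounds from dimension-to-matrix-size ratios, and lower bounds from the Chern-class obstruction carried by products of the tautological line bundle. The genuine gap is in the crossed-product half: you never supply a mechanism that actually produces the \emph{intermediate} value $r'$. Your stated mechanism --- ``calibrating how much of the comparison-obstructing topology is absorbed into the amplification'' within a single tower --- is not workable as described. If the action permutes the summands of the $n$-th building block transitively (which is what produces the Rokhlin-type towers and hence the matrix amplification by the group order $2^{nd}$ in the crossed product), then the dimension-to-rank ratio of that component is divided by $2^{nd}\to\infty$ and its contribution to the radius of comparison collapses all the way to $0$; if the action does not permute transitively, there is no amplification and nothing is absorbed. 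Within one tower the absorption is essentially all-or-nothing, so there is no knob to turn to land exactly at $r'$.

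The paper's key idea, which your proposal is missing, is to \emph{merge two direct systems} and exploit $\rc(A\oplus B)=\max\{\rc(A),\rc(B)\}$. Each $A_n$ is a direct sum $C_n\oplus B_n$: the $C_n$-part (spaces $X_n$, spread over $\bZ_{2^n}^d$ many summands) is tuned to have radius of comparison $r$ and carries an odometer-like action that permutes its summands, so in the crossed product it acquires the $M_{2^{nd}}$ amplification and contributes $0$; the $B_n$-part (spaces $Y_n$) is tuned to have radius of comparison $r'$ and the action on it is inner at each finite stage, so its crossed product is just $M_{r(n)}(C(Y_n))\otimes C(\mathbb{T}^d)$ and the Bott-projection obstruction survives intact. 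Cross point evaluations between the two components keep the limit simple, and inner corrections (the unitaries $u_g^n$) make the level-wise automorphisms compatible with the connecting maps. Then $\rc(A)=\max\{r,r'\}=r$ while $\rc(A\rtimes_\alpha\bZ^d)=\max\{0,r'\}=r'$, with the lower bound in the crossed product witnessed concretely by the surviving line-bundle projections over the $Y$-component rather than by an unspecified ``calibrated'' obstruction. Without this two-component structure (or some substitute for it), your outline cannot certify that the value lands at $r'$ rather than at $0$ or at $r$.
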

\begin{rmk}\label{rmk:r'=0}
	The case $r'=0$ is essentially contained in \cite{hirshberg2024exotic}, and 
	therefore we explicitly prove only the case $r'>0$ in this paper. More 
	specifically, that construction in \cite{hirshberg2024exotic} produced an 
	AH algebra $A$ with positive radius of comparison $r$ and a pointwise outer 
	action of the integers such that the crossed product is
	$\mathcal{Z}$-stable, and in particular has radius of comparison $0$. It 
	provided examples with $r \in (0,1)$. Although $r$ was not 
	computed precisely (only a lower bound was needed), a careful examination 
	using the same arguments as in this paper shows that any value of $r$ can 
	be 
	obtained. The modifications required to achieve $r \geq 1$, and to replace 
	$\bZ$ with $\bZ^d$, are identical to those in our construction and are 
	straightforward. In principle, one could write a unified proof addressing 
	the case $r'=0$, but we chose not to do so to keep the notation manageable 
	in an already technical proof.
\end{rmk}
This shows that there is no simple 
formula relating the radius of comparison of a C*-algebra with that of its 
crossed product. We do not know whether the radius of comparison can increase 
when forming crossed products by pointwise outer actions.

\begin{qst}
	Is there a simple unital stably finite C*-algebra $A$ along with a 
	pointwise outer 
	action $\alpha$ of a countable discrete amenable group $G$ such that $\rc ( 
	A \rtimes_{\alpha} G ) > \rc (A)$?
\end{qst}

We now outline the proof strategy. The proof combines ideas from the 
construction of the example from 
\cite{hirshberg2024exotic} with the diagram merging technique used in 
\cite{HP19,HirPhi23}.  For simplicity, we focus on the case of actions of 
$\bZ$. Given $r>0$, we start out with a construction 
of a simple AH algebra with radius of comparison $r$, following the basic 
setup from \cite{villadsen}. Here the spaces $X_n$ are products of copies of 
$S^2$, the solid arrows indicate coordinate projection maps, and the dotted 
arrows indicate a small number of point evaluations, needed to make the direct 
limit 
simple.

\begin{tikzcd}
	C(X_0) 
	\arrow[r, Rightarrow]\arrow[r,dotted, shift right=2] & C(X_1)\otimes M_{r(1)} \arrow[r]
	\arrow[r, Rightarrow]\arrow[r,dotted, shift right=2] & C(X_2)\otimes M_{r(2)}
	\arrow[r, Rightarrow]\arrow[r,dotted, shift right=2]&\hdots
\end{tikzcd}

To introduce an action, we overlay this diagram on a binary tree.  We introduce point evaluations going across branches of the tree to ensure simplicity. Our action 
 is just a modification of the odometer action: at each level, it is a cyclic 
 permutation of the summands, composed with an inner automorphism that is 
 needed to ensure that the actions at different levels are consistent.

\begin{equation}
	\begin{xy}
		(0,10)*+{C(X_0) }="base";
		(28,-0.5)*+{ C(X_1) \otimes M_{r(1)} }="0";
		(28,20)*+{ C(X_1) \otimes M_{r(1)} }="1";
		(65,-22)*+{ C(X_2) \otimes M_{r(2)} }="00";
		(65,-5)*+{ C(X_2) \otimes M_{r(2)} }="01";
		(65,42)*+{ C(X_2) \otimes M_{r(2)} }="10";
		(65,25)*+{ C(X_2) \otimes M_{r(2)} }="11";
		(110,-28)*+{ C(X_3) \otimes M_{r(3)} }="000";
		(110,-16)*+{ C(X_3) \otimes M_{r(3)} }="001";
		(110,-4)*+{ C(X_3) \otimes M_{r(3)} }="010";
		(110,8)*+{ C(X_3) \otimes M_{r(3)} }="011";
		(110,56)*+{ C(X_3) \otimes M_{r(3)} }="100";
		(110,44)*+{ C(X_3) \otimes M_{r(3)} }="101";
		(110,32)*+{ C(X_3) \otimes M_{r(3)} }="110";
		(110,20)*+{ C(X_3) \otimes M_{r(3)} }="111";
		{\ar@2 "base";"0"};
		{\ar@2 "base";"1"};
		{\ar@2 "0";"01"};
		{\ar@2 "0";"00"};
		{\ar@2 "1";"11"};
		{\ar@2 "1";"10"};
		{\ar@2 "00";"000"};
		{\ar@2 "00";"001"};
		{\ar@2 "01";"010"};
		{\ar@2 "01";"011"};
		{\ar@2 "10";"100"};
		{\ar@2 "10";"101"};
		{\ar@2 "11";"110"};
		{\ar@2 "11";"111"};
		{\ar@{.>}@<-3pt> "base";"0"};
		{\ar@{.>}@<-3pt> "base";"1"};
		{\ar@{.>}@<-3pt> "0";"01"};
		{\ar@{.>}@<-3pt> "0";"00"};
		{\ar@{.>}@<-3pt> "0";"10"};
		{\ar@{.>}@<-3pt> "0";"11"};
		{\ar@{.>}@<-3pt> "1";"01"};
		{\ar@{.>}@<-3pt> "1";"00"};
		{\ar@{.>}@<-3pt> "1";"10"};
		{\ar@{.>}@<-3pt> "1";"11"};
		{\ar@{.>}@<-3pt> "00";"000"};
		{\ar@{.>}@<-3pt> "00";"001"};
		{\ar@{.>}@<-3pt> "00";"010"};
		{\ar@{.>}@<-3pt> "00";"011"};
		{\ar@{.>}@<-3pt> "00";"100"};
		{\ar@{.>}@<-3pt> "00";"101"};
		{\ar@{.>}@<-3pt> "00";"110"};
		{\ar@{.>}@<-3pt> "00";"111"};
		{\ar@{.>}@<-3pt> "01";"000"};
		{\ar@{.>}@<-3pt> "01";"001"};
		{\ar@{.>}@<-3pt> "01";"010"};
		{\ar@{.>}@<-3pt> "01";"011"};
		{\ar@{.>}@<-3pt> "01";"100"};
		{\ar@{.>}@<-3pt> "01";"101"};
		{\ar@{.>}@<-3pt> "01";"110"};
		{\ar@{.>}@<-3pt> "01";"111"};
		{\ar@{.>}@<-3pt> "10";"000"};
		{\ar@{.>}@<-3pt> "10";"001"};
		{\ar@{.>}@<-3pt> "10";"010"};
		{\ar@{.>}@<-3pt> "10";"011"};
		{\ar@{.>}@<-3pt> "10";"100"};
		{\ar@{.>}@<-3pt> "10";"101"};
		{\ar@{.>}@<-3pt> "10";"110"};
		{\ar@{.>}@<-3pt> "10";"111"};
		{\ar@{.>}@<-3pt> "11";"000"};
		{\ar@{.>}@<-3pt> "11";"001"};
		{\ar@{.>}@<-3pt> "11";"010"};
		{\ar@{.>}@<-3pt> "11";"011"};
		{\ar@{.>}@<-3pt> "11";"100"};
		{\ar@{.>}@<-3pt> "11";"101"};
		{\ar@{.>}@<-3pt> "11";"110"};
		{\ar@{.>}@<-3pt> "11";"111"};
	\end{xy}
\end{equation}
The resulting action has the Rokhlin property and is almost periodic. It 
therefore follows from \cite[Theorem~4.1]{hirshberg_JOT} that the crossed 
product has to absorb the Jiang-Su algebra, and in particular, its radius of 
comparison is $0$.

To get intermediate values of radius of comparison, we further complicate the picture. Given $0<r' \leq r$, we construct another AH algebra with radius of comparison $r'$, in a similar manner.

\begin{tikzcd}
	C(Y_0)
	\arrow[r, Rightarrow]\arrow[r,dotted, shift right=2] & C(Y_1)\otimes M_{r(1)} 
	\arrow[r, Rightarrow]\arrow[r,dotted, shift right=2] & C(Y_2)\otimes M_{r(2)} 
	\arrow[r, Rightarrow]\arrow[r,dotted, shift right=2] &\hdots
\end{tikzcd}

We now merge both diagrams, with a small number of point evaluation maps going across:

\begin{equation}\label{OurDiag}
	\begin{xy}
		(0,14)*+{C(X_0) }="base";
		(0,-45)*+ { C(Y_0)}= "Base";
		(28,-0.5)*+{ C(X_1) \otimes M_{r(1)} }="0";
		(28,30)*+{ C(X_1) \otimes M_{r(1)} }="1";
		(28,-45)*+ {C(Y_1)\otimes M_{r(2)}}="2";
		(65,-20)*+{ C(X_2) \otimes M_{r(2)} }="00";
		(65,3)*+{ C(X_2) \otimes M_{r(2)} }="01";
		(65,42)*+{ C(X_2) \otimes M_{r(2)} }="10";
		(65,25)*+{ C(X_2) \otimes M_{r(2)} }="11";
		(65,-45)*+{C(Y_2)\otimes M_{r(2)}}="02";
		(110,-28)*+{ C(X_3) \otimes M_{r(3)} }="000";
		(110,-16)*+{ C(X_3) \otimes M_{r(3)} }="001";
		(110,-4)*+{ C(X_3) \otimes M_{r(3)} }="010";
		(110,8)*+{ C(X_3) \otimes M_{r(3)} }="011";
		(110,56)*+{ C(X_3) \otimes M_{r(3)} }="100";
		(110,44)*+{ C(X_3) \otimes M_{r(3)} }="101";
		(110,32)*+{ C(X_3) \otimes M_{r(3)} }="110";
		(110,20)*+{ C(X_3) \otimes M_{r(3)} }="111";
		(110,-45)*+{C(Y_3)\otimes M_{r(3)}}="002";
		{\ar@2 "base";"0"};
		{\ar@2 "Base"; "2"};
		{\ar@2 "2"; "02"};
		{\ar@2 "02"; "002"};
		{\ar@{.>}@< 4pt> "02"; "002"};
		{\ar@{.>}@< 4pt> "Base"; "2"};
		{\ar@{.>} @<0pt> "base"; "2"};
		{\ar@{.>} @<-10pt> "Base"; "1"};
		{\ar@{.>}@<-3pt>"2"; "01"};
		{\ar@{.>}@<1pt>"2"; "00"};
		{\ar@{.>} @<-3pt>"2"; "10"};
		{\ar@{.>} @<-5pt>"2"; "11"};
		{\ar@{.>}"0"; "02"};
		{\ar@{.>}"1"; "02"};
		{\ar@{.>} @<-5pt> "Base"; "0"};
		{\ar@{.>} @< 4pt>"2"; "02"};
		{\ar@{.>}@<1ex> "base";"0"};
		{\ar@2 "base";"1"};
		{\ar@{.>}@<-1ex> "base";"1"};
		{\ar@2 "0";"01"};
		{\ar@{.>}@<-1ex> "0";"01"};
		{\ar@2 "0";"00"};
		{\ar@{.>}@<1ex> "0";"00"};
		{\ar@2 "1";"11"};
		{\ar@{.>}@<1ex> "1";"11"};
		{\ar@2 "1";"10"};
		{\ar@{.>}@<-1ex> "1";"10"};
		{\ar@{.>}
			"0";"10"};
		{\ar@{.>} "0";"11"};
		{\ar@{.>}
			"1";"01"};
		{\ar@{.>}
			"1";"00"};
		{\ar@2 "00";"000"};
		{\ar@{.>}@<1ex> "00";"000"};
		{\ar@{.>}@<1ex> "02";"000"};
		{\ar@2 "00";"001"};
		{\ar@{.>}@<-1ex> "00";"001"};
		{\ar@{.>} "02";"001"};
		{\ar@2 "01";"010"};
		{\ar@{.>}@<1ex> "01";"010"};
		{\ar@{.>} "02";"010"};
		{\ar@2 "01";"011"};
		{\ar@{.>} "02";"011"};
		{\ar@{.>}@<-1ex> "01";"011"};
		{\ar@2 "10";"100"};
		{\ar@{.>} "02";"100"};
		{\ar@{.>}@<-1ex> "10";"100"};
		{\ar@2 "10";"101"};
		{\ar@{.>} "02";"101"};
		{\ar@{.>}@<1ex> "10";"101"};
		{\ar@2 "11";"110"};
		{\ar@{.>} "02";"110"};
		{\ar@{.>}@<-1ex> "11";"110"};
		{\ar@2 "11";"111"};
		{\ar@{.>} "00";"002"};
		{\ar@{.>} "01";"002"};
		{\ar@{.>} "10";"002"};
		{\ar@{.>} "11";"002"};
		{\ar@{.>}@<1ex> "11";"111"};
		{\ar@{.>} "02";"111"};
		{\ar@{.>} "00";"010"};
		{\ar@{.>}
			"00";"011"};	
		{\ar@{.>}
			"00";"111"};
		{\ar@{.>}
			"00";"110"};
		{\ar@{.>}
			"00";"101"};
		{\ar@{.>}
			"00";"100"};
		{\ar@{.>}	"01";"000"};
		{\ar@{.>}	"01";"001"};
		{\ar@{.>}	"01";"100"};
		{\ar@{.>}	"01";"101"};
		{\ar@{.>}	"01";"110"};
		{\ar@{.>}	"01";"111"};
		{\ar@{.>}	"10";"000"};
		{\ar@{.>}	"10";"001"};
		{\ar@{.>}	"10";"010"};
		{\ar@{.>}	"10";"011"};
		{\ar@{.>}	"10";"110"};
		{\ar@{.>}	"10";"111"};
		{\ar@{.>}	"11";"000"};
		{\ar@{.>}	"11";"001"};
		{\ar@{.>}	"11";"100"};
		{\ar@{.>}	"11";"101"};
		{\ar@{.>}	"11";"010"};
		{\ar@{.>}	"11";"011"};
	\end{xy}
\end{equation}
The radius of comparison of the resulting AH algebra is shown to be the maximum 
of the two. Up to composing with inner automorphisms, which are needed to 
ensure consistency, the actions on the top levels are cyclic permutations, 
whereas the action on the bottom part is trivial. In essence, we are merging an 
action which is a souped-up version of the odometer action - which has the 
Rokhlin property - with an inner action on the bottom part.Since no general 
result applies here, we analyze the crossed product directly: the 
crossed product is an AH algebra too, and its radius of comparison is shown to 
be the same as the one coming from the bottom row. 

The rest of the paper constitutes the proof of Theorem~\ref{thm:main}. In light 
of Remark~\ref{rmk:r'=0}, we only prove explicitly the case in which $0<r' \leq 
r \leq \infty$. We divide the proof into sections. In 
Section~\ref{sec:preliminaries}, we recall some definitions and prove an 
elementary lemma about the existence of sequences of natural numbers with a 
prescribed growth condition. In Section~\ref{sec:construction_algebra} we 
construct our AH algebra $A$, along with the action of $\bZ^d$. In 
Section~\ref{sec:rc(A)} we compute the radius of comparison of $A$, and in 
Section~\ref{sec:rc-crossed-product} we compute the radius of comparison of the 
crossed product.
\begin{rmk}
	It follows from \cite[Theorem~1.1]{hirshberg_JOT} that an action on a 
	non-$\mathcal{Z}$-stable C*-algebra cannot simultaneously be approximately 
	inner and have the Rokhlin property. The action we construct has neither 
	property. However, it has some form of an interpolated property. Let us 
	focus on the case of $d=1$ for this remark. At the $n$'th stage of the 
	construction, we obtain a direct sum of $2^n$ copies of $C(X_n) \otimes 
	M_{r(n)}$ and a single copy of $C(Y_n) \otimes M_{r(n)}$. Denote by 
	$p_{n,1},p_{n,2},\ldots,p_{n,2^n}$ the units of the $2^n$ copies of $C(X_n) 
	\otimes M_{r(n)}$. Set $p_n = \sum_{k=1}^{2^n}p_{n,k}$, and set $q_n = 
	1-p_n$. These projections are in the center of the $n$'th level 
	of the direct system, so they form a central sequence. We would have had 
	the tracial Rokhlin property if the sequence $q_n$ tended to zero in 
	the tracial sense, whereas it would be tracially approximately inner if the 
	sequence $p_n$ tended to zero in the tracial sense. In our 
	construction, neither condition holds. Instead, we obtain Rokhlin-like 
	towers whose 
	sum is a central projection that is neither large nor small. It would be 
	interesting to investigate this phenomenon in more generality, but we do 
	not pursue this direction further in the paper.
\end{rmk}

\section{Preliminaries}\label{sec:preliminaries}

If $A$ is a C*-algebra,  we denote by 
$A_{+}$ the
set of positive elements of $A$. Using the natural embedding $M_n(A)\hookrightarrow M_{n+1}(A)$, we take $M_\infty(A)=\bigcup_{n=1}^\infty M_n(A)$.

We recall the definition of Cuntz subequivalence (going back to \cite{JC}) and 
the radius of comparison. 

\begin{dfn}
Let $A$ be a C*-algebra. For $a,b\in M_\infty(A)_{+}$, we say that $a$ is 
\emph{Cuntz subequivalent} to $b$, denoted by $a\lesssim b$, if there is a 
sequence $\left(v_n\right)_{n=1}^\infty$ in $M_\infty(A)$  such that
\[
\lim_{n\to\infty}v_nbv_n^*=a.
\]
\end{dfn}
Let $A$ be unital and stably finite. Let $T(A)$ be the tracial state space of 
$A$. Any $\tau\in T(A)$ extends to a non-normalized trace on $M_n(A)$, for 
$n\in\bN$. We use $\tau$ for the extended trace as 
well. For $a\in M_{n}(A)_{+}$, set
$d_{\tau}(a)=\lim_{n\to\infty}\tau(a^{1/n})$.

The following is the definition of the radius of comparison for unital exact 
stably finite C*-algebras. We refer the reader to \cite{Tom06} for further 
details. 

\begin{dfn}

Let $A$ be a unital stably finite C*-algebra. 

\begin{enumerate}
\item
 For $r\in[0,\infty)$, $A$ is said to have \emph{$r$-comparison} if for any 
 $n\in\{0,1,2,\ldots\}$ and $a,b\in M_n(A)_{+}$ satisfying 
 $d_\tau(a)+r<d_\tau(b)$ 
 for all $\tau\in T(A)$, we have $a\lesssim b$.
\item 
  The radius of comparison, denoted by $\rc(A)$, is the infimum of 
  $r\in[0,\infty)$ such that $A$ has $r$-comparison. If there is no such $r$, 
  one sets $\rc(A)=\infty$.
  \end{enumerate}
\end{dfn} 
  In general, the definition of the radius of comparison requires quasitraces 
  rather than traces. However, the C*-algebras that we consider in this paper 
  are nuclear, and in particular exact, so all quasitraces are traces by 
  \cite[Theorem 5.11]{Hag14}.

We recall the following simple facts.
\begin{prp}\cite[Proposition 6.2]{Tom06}\label{prop_roc}
Let $A$ and $B$ be stably finite unital C*-algebras. The following hold:
\begin{enumerate}
\item $\rc(A\oplus B)=\max\left\{\text{rc}(A),\text{rc}(B)\right\}.$
\item $\rc(M_n\otimes A)=\frac{1}{n}\rc(A),\,\, \text{for}\,\,n\in\bN.$
\end{enumerate}
\end{prp}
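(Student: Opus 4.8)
The plan is to reduce each identity to a statement about $r$-comparison for a single fixed $r$, and then pass to the infimum. I rely on two standard structural facts. First, the tracial state spaces decompose as $T(A \oplus B) = \{\, s\sigma \oplus (1-s)\rho : s \in [0,1],\ \sigma \in T(A),\ \rho \in T(B)\,\}$, while every tracial state of $M_n \otimes A = M_n(A)$ is of the form $\tau \otimes \operatorname{tr}_n$ for a unique $\tau \in T(A)$, with $\operatorname{tr}_n$ the normalized trace on $M_n$. Second, Cuntz subequivalence is computed in $M_\infty$, so it is insensitive to the identifications $M_\infty(A \oplus B) = M_\infty(A) \oplus M_\infty(B)$ and $M_\infty(M_n(A)) = M_\infty(A)$. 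Under the first identification the rank function is additive, $d_{s\sigma \oplus (1-s)\rho}(a \oplus a') = s\, d_\sigma(a) + (1-s)\, d_\rho(a')$; under the second, comparing the unnormalized trace extensions to $M_k(M_n(A)) = M_{nk}(A)$ yields $d_{\tau \otimes \operatorname{tr}_n}(a) = \tfrac1n\, d_\tau(a)$ for $a \in M_\infty(A)_+$.

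For part (1) I will show that $A \oplus B$ has $r$-comparison if and only if both $A$ and $B$ do. Assuming both summands have $r$-comparison and given $a \oplus a',\ b \oplus b' \in M_\infty(A \oplus B)_+$ with $d_\tau(a \oplus a') + r < d_\tau(b \oplus b')$ for all $\tau$, I evaluate at the pure traces ($s=1$ and $s=0$) to obtain $d_\sigma(a) + r < d_\sigma(b)$ for all $\sigma \in T(A)$ and $d_\rho(a') + r < d_\rho(b')$ for all $\rho \in T(B)$; $r$-comparison in each summand gives $a \lesssim b$ and $a' \lesssim b'$, hence $a \oplus a' \lesssim b \oplus b'$. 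For the converse I deduce $r$-comparison of $A$ (the argument for $B$ is symmetric): given $a, b \in M_m(A)_+$ with $d_\sigma(a) + r < d_\sigma(b)$ for all $\sigma$, I choose $N > \max(m, r)$, regard $a, b$ in $M_N(A)_+$ by padding with zeros, and feed the pair $a \oplus 0,\ b \oplus 1_{M_N(B)}$ into $r$-comparison for $A \oplus B$. At a trace $s\sigma \oplus (1-s)\rho$ the required inequality reduces to $r < s\bigl(d_\sigma(b) - d_\sigma(a)\bigr) + (1-s)N$, whose right-hand side is affine in $s$ with both endpoint values exceeding $r$ (namely $d_\sigma(b)-d_\sigma(a) > r$ at $s=1$ and $N > r$ at $s=0$), hence it holds throughout $[0,1]$. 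Thus $a \oplus 0 \lesssim b \oplus 1_{M_N(B)}$, and projecting the implementing sequence onto the first coordinate gives $a \lesssim b$ in $A$.

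For part (2), the rank identity $d_{\tau \otimes \operatorname{tr}_n}(a) = \tfrac1n d_\tau(a)$ together with the coincidence of Cuntz subequivalence in $M_n(A)$ and in $A$ shows that the hypothesis ``$d_{\tilde\tau}(a) + s < d_{\tilde\tau}(b)$ for all $\tilde\tau \in T(M_n(A))$'' is literally ``$d_\tau(a) + ns < d_\tau(b)$ for all $\tau \in T(A)$'', so $M_n(A)$ has $s$-comparison exactly when $A$ has $(ns)$-comparison. Finally I assemble the infima: for a fixed algebra the set of $r$ admitting $r$-comparison is upward closed, with infimum equal to the radius of comparison. For part (1) this set for $A \oplus B$ is the intersection of the corresponding sets for $A$ and $B$, whose infimum is $\max\{\rc(A), \rc(B)\}$; for part (2) it is obtained by rescaling by $1/n$, so its infimum is $\tfrac1n \rc(A)$. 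The one delicate point is the converse in part (1): one must verify the rank inequality at the genuinely mixed traces $0 < s < 1$, not just at the pure ones, and this is exactly what the large-rank auxiliary element $1_{M_N(B)}$ secures.
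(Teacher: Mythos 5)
Your proof is correct. The paper does not prove this proposition at all---it is quoted from \cite{Tom06}*{Proposition 6.2}---so there is no in-paper argument to compare against; your reduction (decomposing the tracial states of $A\oplus B$ as $s\sigma\oplus(1-s)\rho$ and those of $M_n(A)$ as $\tau\otimes\operatorname{tr}_n$, testing the pure traces for the easy inclusion, and using the padded pair $a\oplus 0\lesssim b\oplus 1_{M_N(B)}$ with the affine-in-$s$ estimate for the converse) is the standard one and is essentially Toms's. The only caveat worth recording is that for general stably finite unital C*-algebras the definition of $\rc$ uses lower semicontinuous $2$-quasitraces rather than traces; your argument goes through verbatim with quasitraces (which decompose over direct sums and matrix amplifications in the same way), and in the exact setting of this paper the distinction disappears anyway.
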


We'll need the following lemma about infinite products. 
(See similar lemmas, used for similar purposes, in \cite[Lemma 5.7]{ASV23} and 
\cite[Lemmas 3.9, 3.10]{HirPhi23}.)

\begin{lem}\label{seq_lem_zd}
Let $0 < r' \leq  r<1$, and $d\in\bN$. There exist sequences 
$(d(n))_{n\in\bN}$ and $(d'(n))_{n\in\bN}$ in $\bN$ such that 
\begin{enumerate}[label=(\alph*)]
\item $d(n)$ is nondecreasing and $\lim_{n\rightarrow\infty}d(n)=\infty$.\label{first_sq_lem}
\item $\displaystyle{\prod_{n=1}^{\infty}\frac{d(n)}{d(n)+1+2^{dn-d}}=r}$.\label{sec_sq_lem}
\item $d'(n)\leq d(n)$ for all $n\in\bN$.\label{third_sq_lem}
\item $\displaystyle{\prod_{n=1}^{\infty}\frac{d'(n)}{d(n)+1+2^{dn-d}}}=r'$.\label{fourth_sq_lem}
\end{enumerate}
\end{lem}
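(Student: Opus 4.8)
The plan is to recast both product conditions in additive (logarithmic) form and then build the two sequences by a greedy, budget-filling induction, exploiting that the admissible values of each factor are discrete but have vanishing gaps. Write $b_n = 1 + 2^{dn-d}$, so that condition~\ref{sec_sq_lem} reads $\prod_n \frac{d(n)}{d(n)+b_n} = r$ and condition~\ref{fourth_sq_lem} reads $\prod_n \frac{d'(n)}{d(n)+b_n} = r'$. I would first construct $(d(n))$ satisfying \ref{first_sq_lem} and \ref{sec_sq_lem}. Once $(d(n))$ is fixed, dividing the two products cancels the common denominators, so \ref{third_sq_lem} and \ref{fourth_sq_lem} reduce to choosing integers $1 \leq d'(n) \leq d(n)$ with $\prod_n \frac{d'(n)}{d(n)} = r'/r \in (0,1]$.

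For the first stage, put $s_n(k) = \log(1 + b_n/k)$ for $k \geq 1$, so that \ref{sec_sq_lem} becomes $\sum_n s_n(d(n)) = -\log r =: L > 0$. For each $n$ the map $k \mapsto s_n(k)$ is strictly decreasing, tends to $0$ as $k \to \infty$, and has consecutive gaps $s_n(k) - s_n(k+1) = \log(1 + \frac{b_n}{k(k+1+b_n)}) \leq 1/k$. I would define $d(n)$ inductively as the least integer $k \geq \max(d(n-1), n)$ with $s_n(k) \leq R_{n-1}$, where $R_{n-1} = L - \sum_{m<n} s_m(d(m))$ is the remaining budget; this is possible since $s_n(k) \to 0$. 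Then $(d(n))$ is nondecreasing with $d(n) \geq n \to \infty$, giving \ref{first_sq_lem}, while the partial sums $S_n = \sum_{m \leq n} s_m(d(m))$ increase to some $S_\infty \leq L$. The crux is to exclude $S_\infty < L$: if the budget stayed bounded below by some $R_\infty > 0$, then because $d(n-1) \to \infty$ we have $b_n / \max(d(n-1), n) \geq 2^{d(n-1)}/(2d(n-1)+1) \to \infty$, so the value at the floor $\max(d(n-1),n)$ eventually exceeds $R_{n-1}$; hence the defining inequality binds, $d(n)-1$ is admissible with $s_n(d(n)-1) > R_{n-1}$, and the gap bound $1/(d(n)-1) \to 0$ forces $s_n(d(n)) \geq R_{n-1} - o(1) \geq R_\infty/2$ for all large $n$. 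This makes $\sum_n s_n(d(n))$ diverge, contradicting $S_\infty \leq L$. Thus $S_\infty = L$, which is \ref{sec_sq_lem}.

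The second stage is the same idea, and is cleaner because the denominators are already fixed. Writing $t_n(j) = \log(d(n)/j)$ for $1 \leq j \leq d(n)$, I want $\sum_n t_n(d'(n)) = \log(r/r') =: L' \geq 0$, the case $L' = 0$ being $d'(n) = d(n)$. Defining $d'(n)$ as the least integer $j \geq 1$ with $t_n(j) \leq R'_{n-1}$ yields $d'(n) = \lceil d(n) e^{-R'_{n-1}} \rceil$, so $1 \leq d'(n) \leq d(n)$ automatically (condition~\ref{third_sq_lem}), and moreover $d'(n) \geq d(n) e^{-L'} \to \infty$. Since $t_n(1) = \log d(n) \to \infty$ the defining inequality binds for all large $n$, and the gaps $t_n(j) - t_n(j+1) = \log(1 + 1/j) \leq 1/j$ vanish because $d'(n) \to \infty$; the same budget-exhaustion argument as above then drives the partial sums exactly to $L'$. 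Hence $\prod_n \frac{d'(n)}{d(n)} = r'/r$, which together with the first stage gives \ref{fourth_sq_lem}.

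The main obstacle is that the targets $r, r'$ are arbitrary reals while the factors are rational and can only be changed in discrete jumps, so one must rule out an unavoidable overshoot as well as a premature stalling of the partial products below the target. Both are controlled by the vanishing-gap estimates $\leq 1/k$ and $\leq 1/j$, together with the rapid growth forced on the denominators: in the first stage the super-exponential term $2^{dn-d}$ pushes $d(n)$ to grow at least like $2^{d(n-1)}$, and it is exactly this growth that guarantees the greedy choice consumes a definite portion of the remaining budget at each large step rather than leaving it trapped above the target. The only compatibility point needing care — that exhausting the second budget never violates $d'(n) \leq d(n)$ — is handled automatically by the bound $d'(n) \geq d(n) e^{-L'}$.
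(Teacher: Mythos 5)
Your overall strategy is the same greedy construction as the paper's, just written additively: choose $d(n)$ minimally so that the partial product stays above $r$, then make a second greedy pass for $d'(n)$. Your reduction of the second stage to $\prod_n d'(n)/d(n)=r'/r$ (cancelling the common denominators) is a clean simplification, and that stage is correct since $t_n(1)=\log d(n)\to\infty$ really does force the constraint to bind there. The first stage, however, has a genuine gap, plus a smaller defect. The smaller one: with the non-strict constraint $s_n(k)\le R_{n-1}$ the algorithm can terminate prematurely — for $d=1$, $r=1/2$ one gets $d(1)=2$, $s_1(2)=\log 2=L$, hence $R_1=0$, and no $k$ satisfies $s_2(k)\le 0$. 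The serious one is the step ruling out $S_\infty<L$. It rests on the claim that $s_n$ evaluated at the floor $\max(d(n-1),n)$ eventually exceeds $R_{n-1}$, justified by the inequality $b_n/\max(d(n-1),n)\ge 2^{d(n-1)}/(2d(n-1)+1)\to\infty$. This is false: convergence of the product to a positive limit forces $\sum_n b_n/d(n)<\infty$, so $d(n)$ must outgrow $2^{dn}$, and then $b_{n+1}/d(n)\to 0$, so the floor value $\log\bigl(1+b_{n+1}/d(n)\bigr)$ tends to $0$, not to $\infty$. Your displayed bound conflates the fixed parameter $d$ appearing in the exponent of $b_n=1+2^{d(n-1)}$ (meaning $d\cdot(n-1)$) with the sequence value $d(n-1)$, which is vastly larger. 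Because your $d(n)$ is a minimum over $k\ge\max(d(n-1),n)$, minimality says something about $d(n)-1$ only when the greedy choice lies strictly above the floor; without the binding claim, your gap-bound argument has nothing to apply to.

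The conclusion is salvageable by either of two routes. The paper's route: drop the floor entirely, take $d(n)$ to be the least $k\in\bN$ with $\frac{k}{k+b_n}>\frac{r}{r_{n-1}}$, prove monotonicity afterwards, and then $d(m)-1$ always competes with $d(m)$, so convergence of the product to $s>r$ contradicts minimality once $\frac{b_m}{d(m)-1+b_m}<1-r/s$. Alternatively, keep your floor but argue the missing case directly: if the constraint fails to bind for all large $n$ then $d(n)=\max(d(n-1),n)$ eventually collapses to $d(n)=n$, and $s_n(n)=\log(1+b_n/n)\to\infty$ contradicts $s_n(d(n))\le R_{n-1}\le L$; and once it binds at a step with $d(n)$ large, $R_n<s_n(d(n)-1)-s_n(d(n))\le 1/(d(n)-1)$, which already sends the budget to $0$. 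As written, though, the key step of your first stage fails.
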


\begin{proof}

For $0<r<1$, we first construct a sequence $(d(n))_{n\in\bN}$ in $\bN$ 
satisfying \ref{first_sq_lem} and \ref{sec_sq_lem}. 

We prove this by induction on $n$. Define
 \[d(1)=\min\left(\left\{k\in\bN\suchthat \frac{k}{k+2}>r\right\}\right).\]
 
The set in the above definition is non-empty. Fix
 \[
 r_1=\frac{d(1)}{d(1)+2} .
 \]
 Then $1> r_1>r$. For $n\geq 2$, set 
 \[
 d(n)=\min\left(\left\{k\in\bN\suchthat 
 \frac{k}{k+1+2^{dn-d}}>\frac{r}{r_{n-1}}\right\}\right)\quad\text{and}\quad 
 r_n=\prod_{k=1}^{n}\frac{d(k)}{d(k)+1+2^{dk-d}}.
 \]
 
 We now show that
 \begin{enumerate}
 \item $1 > r_{n}\geq r_{n+1}>r$ for all $n\in\bN$,\label{first_eq_sq_lem}
 \item $d(n)\leq d(n+1)$ for all $n\in\bN$,\label{second_eq_sq_lem}
 \item $\lim_{n\rightarrow\infty}r_n=r$.\label{third_eq_sq_lem}
 \end{enumerate}
 
 (\ref{first_eq_sq_lem}) is immediate. For (\ref{second_eq_sq_lem}), note that
 \[
\frac{r}{r_{n-1}}\leq\frac{r}{r_n}<\frac{d(n+1)}{d(n+1)+1+2^{dn}}<\frac{d(n+1)}{d(n+1)+1+2^{dn-d}}. 
 \]
 
 Thus, \[d(n+1)\in\left\{k\in\bN\suchthat 
 \frac{k}{k+1+2^{dn-d}}>\frac{r}{r_{n-1}}\right\}.\]
 
Since $d(n)$ is defined to be the minimum value of the above set, we conclude that $d(n)\leq d(n+1)$.
We now prove (\ref{third_eq_sq_lem}). For this, by (\ref{first_eq_sq_lem}), we 
see that $(r_n)_{n\in\bN}$ is a decreasing sequence and bounded below by $r$. 
Thus, $(r_n)_{n\in\bZ_{n>0}}$ has a limit, which we denote $s$. We know that $s 
\geq r$. We need to show that $r \geq s$. Since 
\[
{\displaystyle \prod_{n=1}^{\infty}\frac{d(n)}{d(n)+1+2^{dn-d}}}=s,
\]
we get 
\[
\sum_{n=1}^{\infty}\frac{1+2^{dn-d}}{d(n)+1+2^{dn-d}}<\infty.
\]
Thus, 
$\displaystyle{\lim_{n\rightarrow\infty}\frac{2^{dn-d}+1}{d(n)+1+2^{dn-d}}=0}$, 
which implies, $\displaystyle{\lim_{n\rightarrow 
\infty}\frac{d(n)}{1+2^{dn-d}}=\infty}$.
Furthermore, 
\[
\lim_{n\rightarrow\infty}\frac{2^{dn-d}+1}{d(n)+2^{dn-d}}=0.
\]
Assume, to obtain a contradiction, that $r<s$. Then, 
$\displaystyle{0<\frac{r}{s}<1}$. Choose $m\in\bN$ 
large enough, such that 
\[
\frac{2^{dm-d}+1}{d(m)+2^{dm-d}}<1-\frac{r}{s}.
\]
Then, $d(m)\geq 2$ and \[
\frac{r}{s}<1-\frac{2^{dm-d}+1}{d(m)+2^{dm-d}}=\frac{d(m)-1}{(d(m)-1)+1+2^{dm-d}}
. 
\]
 Since $s\leq r_{m-1}$, 

\[
\frac{r}{r_{m-1}}\leq \frac{r}{s}<\frac{d(m)-1}{(d(m)-1)+1+2^{dm-d}}.
\]
This contradicts the definition of $d(m)$.

We now construct the sequence $(d'(n))_{n\in\bN}$ in $\bN$. If $r'=r$, then we 
can simply choose $d'(n) = d(n)$, so we assume from now on that $r' < r$. 

For $n\in\bN$, set 
\[
\rho_n=\prod_{k={n+1}}^{\infty}\frac{d(k)}{d(k)+1+2^{dk-d}} .
\]
Then $\rho_n < 1$ for all $n\in\{0,1,2,\ldots\}$.

We construct a sequence $(t(n))_{n\in\bN}$ by induction such that 
$t(n)(1+2^{dn-d}) \in \bN$ for all $n$, such that
\[
1\leq t(n)(1+2^{dn-d})\leq d(n),
\]
and such that, with 
\[
\gamma_n=\prod_{k=1}^{n}\frac{t(k)(1+2^{dk-d})}{d(k)+1+2^{dk-d}} ,
\] we have
\[
\gamma_n\rho_n\geq r'\quad \text{and}\quad \rho_n\gamma_n-r'<\frac{1}{d(n)+1+2^{dn-d}}.
\]

Note that, since $\lim_{n\rightarrow\infty}(d(n)+1+2^{dn-d})=\infty$ and  $\lim_{n\rightarrow\infty}\rho_n=1$, we conclude that $\lim_{n\rightarrow\infty}\gamma_n=r'$. Defining $d'(n)=t(n)(1+2^{dn-d})$ gives the conclusion of the lemma.

For the base case, let $m$ be the least integer such that
\[
\frac{m\rho_1}{d(1)+2}\geq r'.
\]

Then $m\geq 1$. Fix $\displaystyle{t(1)=\frac{m}{2}}$. Notice that,
\[
\frac{d(1)\rho_1}{d(1)+2}
=\frac{d(1)}{d(1)+2}\prod_{k=2}^{\infty}\frac{d(k)}{d(k)+1+2^{dk-d}}
=\prod_{k=1}^{\infty}\frac{d(k)}{d(k)+1+2^{dk-d}}=\rho_0=r>r'.
\]

Thus, we have $1\leq 2t(1)\leq d(1)$. Moreover
\[
\frac{(m-1)\rho_1}{d(1)+2}<r'.
\]

Since $\rho_1\leq 1$, we have
\[
0\leq \frac{m\rho_1}{d(1)+2}-r'<\frac{\rho_1}{d(1)+2}\leq \frac{1}{d(1)+2}.
\]

Note that 
\[\gamma_1=\frac{2t(1)}{d(1)+2}=\frac{m}{d(1)+2} ,
\] 
so
\[
 \gamma_1\rho_1-r'<\frac{1}{d(1)+2}. 
\]

Now suppose $t(1),t(2),\hdots,t(n)$ were selected, and that, with $\gamma_n$ as 
above, we have $\gamma_n\rho_n\geq r'$ and 
\[
\gamma_n\rho_n-r'<\frac{1}{d(n)+1+2^{dn-d}} .
\]

Let $m$ be the least integer such that 
\[
\frac{m\gamma_{n}\rho_{n+1}}{d(n+1)+1+2^{dn}}\geq r'.
\]

Since $r'>0$, $\gamma_n>0$ and $\rho_{n+1}>0$, we have $m\geq 1$. Fix 
\[
t(n+1)=\frac{m}{1+2^{dn}} .
\] 

See that
\begin{equation*}
\begin{split}
\frac{d(n+1)\gamma_n\rho_{n+1}}{d(n+1)+1+2^{dn}}&=\frac{\gamma_n d(n+1)}{d(n+1)+1+2^{dn}}\prod_{k=n+2}^{\infty}\frac{d(k)}{d(k)+1+2^{dk-d}}\\
&= \gamma_n\prod_{k=n+1}^{\infty}\frac{d(k)}{d(k)+1+2^{dk-d}}\\
&= \gamma_n\rho_n\geq r'.
\end{split}
\end{equation*}

Thus, $1\leq m\leq d(n+1)$, so $ 1\leq t(n+1)(1+2^{dn})\leq d(n+1)$. Moreover,
\[
\frac{(m-1)\gamma_n\rho_{n+1}}{d(n+1)+1+2^{dn}}<r'.
\]

Since $\rho_n,\gamma_n\leq 1$, we have

\[
\frac{m\gamma_n\rho_{n+1}}{d(n+1)+1+2^{dn}}-r'<\frac{\gamma_n\rho_{n+1}}{d(n+1)+1+2^{dn}}<\frac{1}{d(n+1)+1+2^{dn}}.
\]

Using the fact that $m=t(n+1)(1+2^{dn})$ and $\displaystyle{\gamma_{n+1}=\frac{t(n+1)(1+2^{dn})\gamma_n}{d(n+1)+1+2^{dn}}}$, 
we get
\[
\gamma_{n+1}\rho_{n+1}-r'<\frac{1}{d(n+1)+1+2^{dn}}.
\]
This concludes the induction.
\end{proof}

\section{Construction}\label{sec:construction_algebra}
Let $0<r'\leq r\leq \infty$ and $d\in\bN$.
Our goal is to construct a simple AH algebra $A$ and an action 
$\alpha:\bZ^d\rightarrow\text{Aut}(A)$ such that $\rc (A)=r$ and $\rc (A 
\rtimes_{\alpha} \bZ^d )= r'$.  
The construction was outlined in the introduction. We now proceed to make it 
 precise. 

\begin{ctn}\label{OurConstruction_zd}
We define the following objects. 
\begin{enumerate}
\item \label{OurConstruction.1}
Let $0< r'\leq r\leq\infty$, and $d\in\bN$ be given. Let 
$(h_n)_{n\in\bN\cup\{0\}}$ and $(h'_n)_{n\in\bN\cup\{0\}}$ be sequences in 
$\bN$ such that
\begin{itemize}
	\item If $0< r'\leq r<\infty$, then we let $h$ be the least natural number 
	such that $r<h$, and set $h_n=h'_n=h$ for all $n\in\bN$.
	\item If $0< r' < r=\infty$, then 
	\begin{enumerate}
		\item $h'_n \equiv h$, where $h$ is the least natural number such that 
		$r'<h$, 
		and
		\item $h_n$ is chosen such that  $h_0=1$, 
		$\lim_{n\to\infty}h_n=\infty$, and 
		\[\lim_{n\to\infty}\frac{h_n}{2^{nd}}=0.\]
		
	\end{enumerate}
\item If $r'=r=\infty$, we set $h'_n=h_n$, and choose this sequence so that 
$h_0=1$, 
$\lim_{n\to\infty}h_n=\infty$, and \[\lim_{n\to\infty}\frac{h_n}{2^{nd}}=0.\]
\end{itemize}

\item 
Choose sequences of natural numbers $\left(d(n)\right)_{n\in\bN}$ and 
$\left(d'(n)\right)_{n\in\bN}$ such that

\begin{itemize}
	\item If $0< r'\leq r<\infty$, and $h$ is the  least natural number such 
	that 
	$r<h$, $\kappa'=\frac{r'}{h}$, and $\kappa=\frac{r}{h}$, then  $d(n)$ and 
	$d'(n)$ are chosen to satisfy conditions 
	\ref{first_sq_lem}, \ref{sec_sq_lem}, \ref{third_sq_lem} and 
	\ref{fourth_sq_lem} of Lemma \ref{seq_lem_zd} for $0< \kappa'\leq \kappa<1$.
	\item If $0< r'<r=\infty$, and $h$ is the  least natural number such that 
	$r'<h$, and $\kappa'=\frac{r'}{h}$, then $d(n)$ are chosen to satisfy 
	conditions  \ref{first_sq_lem} and \ref{sec_sq_lem} of Lemma 
	\ref{seq_lem_zd} for $0< \kappa'<1$, and we set $d'(n)=d(n)$.
	\item If $r'=r=\infty$, we fix some constant $0< c<1$ and take $d(n)=d'(n)$ 
	satisfying conditions  \ref{first_sq_lem} and \ref{sec_sq_lem} of Lemma 
	\ref{seq_lem_zd}.
\end{itemize} 

\item Set $l(0)= r(0)=s(0)=s'(0)=1$. For $n\in\bN$, set 
\begin{itemize}
\item $\displaystyle{l(n)=d(n)+1+2^{dn-d}}$,
 \item  $r(n)=\prod_{j=1}^n l(j)$,
 \item $s(n)=\prod_{j=1}^{n}d(j)$,
 \item $s'(n)=\prod_{j=1}^{n}d'(j)$.
\end{itemize}

\item \label{OurConstruction.2}
For $n\in\{0,1,2,\ldots\}$, we set 
\[
X_n=\left(S^2\right)^{h_n s(n)} \text{ and } 
Y_n=\left(S^2\right)^{h'_ns'(n)}.
\]
 We identify 
 \[X_{n+1}=\left(S^2\right)^{h_{n+1}s(n+1)}=X_n^{d(n+1)}\times  \left(S^2\right)^{(h_{n+1}-h_n)s(n+1)},\]
  and similarly
  \[Y_{n+1}=Y_n^{d'(n+1)}\times  \left(S^2\right)^{(h'_{n+1}-h'_n)s'(n+1)}.\]
  
  For $n\in\{0,1,2,\ldots\}$, 
$j\in\{1,2,\hdots,d(n+1)\}$, 
and $i\in\{1,2,\hdots,d'(n+1)\}$, let 
\[P_j^{n}:X_{n+1}\rightarrow X_n \text{ and } 
Q_i^n:Y_{n+1}\rightarrow Y_n
\]
 be the $j$-th and the $i$-th coordinate 
projections, respectively.

\item \label{Constuction_choice_of_points} Choose points $x_m\in X_m$ and $y_m\in Y_m$ for $m\in\{0,1,2,\ldots\}$,   
such 
that for all $n\in\{0,1,2,\ldots\}$, the set
\begin{align}\label{con_simple}
\begin{split}
&\Big\{\left(P_{\nu_1}^{(n)}\circ P_{\nu_2}^{(n+1)}\circ\hdots\circ P_{\nu_{m-n}}^{(m-1)}\right)(x_m) \suchthat m>n,\,\, \text{and}\,\,\\ 
 & \nu_j\in\{1,2,\hdots,d(n+j)\}\,\, \text{for}\,\, j=1,2,\hdots,m-n\Big\}
 \end{split}
 \end{align}
 is dense in $X_n$, and the set
\begin{align*}
\begin{split}
&\Big\{\left(Q_{\nu_1}^{(n)}\circ Q_{\nu_2}^{(n+1)}\circ\hdots\circ Q_{\nu_{m-n}}^{(m-1)}\right)(y_m) \suchthat  m>n,\,\, \text{and}\,\,\\ 
 & \nu_i\in\{1,2,\hdots,d'(n+i)\}\,\, \text{for}\,\, i=1,2,\hdots,m-n\Big\}
 \end{split}
 \end{align*} 
 is dense in $Y_n$.
 
 \item \label{OurConstruction.3}
 For $n\in\{0,1,2,\ldots\}$, define 
 \[
 C_n\cong C(X_n\times \bZ_{2^n}^d,M_{r(n)}),\quad \text{and}\quad B_n\cong 
 C(Y_n,M_{r(n)}).
 \]
 We freely identify $C_n$ with \[C_n = M_{r(n)}\otimes C(X_n\times 
 \bZ_{2^n}^d)\] and $B_n$ with \[B_n = M_{r(n)}\otimes C(Y_n).\]
 We also identify 
 \[
 M_{r(n)}\cong M_{l(1)}\otimes M_{l(2)}\otimes\hdots\otimes M_{l(n)}.
 \]
 For $n\in\{0,1,2,\ldots\}$, denote 
 \begin{equation}\label{equ_An}
 A_n=C_n\oplus B_n.
 \end{equation}
 
 \item \label{OurConstruction.4}
 Let $\pi_{n,{n+1}}:\bZ_{2^{n+1}}^d\to \bZ_{2^n}^d$ denote the map 
 \[\pi_{n,{n+1}}(k_1,k_2,\hdots,k_d)=(k_1(\text{mod}\,2^n), 
 k_2(\text{mod}\,2^n),\hdots, k_d(\text{mod}\,2^n) ).\]
 
 \item \label{OurConstruction.5}
 For $n\in\{0,1,2,\ldots\}$, we fix 
 continuous functions
 \[
 S_{n,1},S_{n,2},\hdots,S_{n,{d(n+1)}}:X_{n+1}\times \bZ_{2^{n+1}}^d\rightarrow 
 X_n\times \bZ_{2^n}^d,
 \]
 as follows: 
 for $k\in \bZ_{2^{n+1}}^d$ and for $x\in X_{n+1}$, set  
 \[
 S_{n,j}(x,k)=\left(P_{j}^{(n)}(x),\pi_{n,n+1}(k)\right).
 \] 
 
 \item We define the following index sets \[
 L(n)=\{\bZ_{2^{n-1}}^d\}\bigsqcup\{\bigstar\}\bigsqcup\{1,2,\hdots,d(n)\} \, .
 \]
 Note that the cardinality of $L(n)$ is $l(n)$; indexing the set with those 
 symbols rather than with numbers would make some notation easier.
  Let $\{E_{z,z'}^{n}\}_{z,z'\in L(n)}\in B(\ell^2(L(n)))$ be the standard 
  matrix units, that is, the operators that act on the basis 
  $\{\delta_z^{n}, z\in L(n)\}$ of $\ell^2(L(n))$ by
  \[
E_{z,z'}^{n}(\delta_{z''}^{n})=\begin{cases}
 \delta_z^n, & \text{for } z'=z'',\\
        0, & \text{otherwise}.
 \end{cases}
 \]
To lighten notation, we freely identify $B(\ell^2(L(n)))$ with $M_{l(n)}$ by 
thinking of the 
coordinates as being indexed by the set $L(n)$.

 \item \label{OurConstruction.8_zd}
  We define unital homomorphisms
  
 \begin{eqnarray*}
  \gamma_n:C(X_n\times\bZ_{2^n}^d)\,\oplus \, C(Y_n)\rightarrow \left(C(X_{n+1}\times\bZ_{2^{n+1}}^d)\oplus C(Y_{n+1})\right)\otimes B(\ell^2(L(n+1)))
 \end{eqnarray*}
 by
 \begin{align}\label{def_gamma_n}
 \begin{split}
 \gamma_n(f,g)= &{} \Bigg(\sum_{z\in \bZ_{2^n}^d} f(x_n,z)\otimes E^{n+1}_{z,z} + 
 g(y_n)\otimes E^{n+1}_{\bigstar,\bigstar}+ \sum_{z=1}^{d(n+1)} f\circ 
 S_{n,z}\otimes E^{n+1}_{z,z},\\ 
 &{} \quad \sum_{z\in \bZ_{2^n}^d} f(x_n,z)\otimes E^{n+1}_{z,z}+ g(y_n)\otimes 
 E^{n+1}_{\bigstar,\bigstar}+ \sum_{z=d'(n+1)+1}^{d(n+1)} g(y_n)\otimes 
 E^{n+1}_{z,z} \\ &{} \quad + \sum_{z=1}^{d'(n+1)}g\circ Q_z^n\otimes 
 E^{n+1}_{z,z}\Bigg).
 \end{split}
 \end{align}

  We then define 
 \[
 \Gamma_{n+1,n}:A_n\rightarrow A_{n+1}
 \]
 by $\Gamma_{n+1,n}=\text{id}_{M_{r(n)}}\otimes \gamma_n$. For $n\geq m\geq 0$, 
 set
 \[
 \Gamma_{n,m}=\Gamma_{n,n-1}\Gamma_{n-1,n-2}\circ\hdots\circ\Gamma_{m+1,m}:A_m\rightarrow A_n.
 \]
 
 \item 
 Set $A=\displaystyle{\lim_{\longrightarrow}}(A_n , \Gamma_{n+1,n})$. The maps 
 associated with the direct limit are denoted 
 $\Gamma_{\infty,m}:A_m\rightarrow A$ for $m\geq 0$.\label{ah_algebra}
 
 \item \label{action_def_first_step}
 For $g\in \bZ^d$, define unitaries $w_g^{n}\in B(\ell^2(L(n)))$ as follows. 
 For $z \in \bZ_{2^{n-1}}^d$ and $g= \bZ^d$, we write 
 \[
 z+g=z+ (g +  2^{n-1}\bZ^d) \in  \bZ_{2^{n-1}}^d .
 \] 
 We then set
 \[
    w_g^{n}(\delta_z^{n}) = \begin{cases}
        \delta^n_{z+g}, & \text{for } z\in \bZ_{2^{n-1}}^d,\\
        \delta^n_z, & \text{for } z\in L(n)\setminus \bZ_{2^{n-1}}^d.
        \end{cases}
  \]
  
 We then have 
 \begin{equation}\label{relation_ug_Ez}
\text{Ad}(w_g^n)\circ E^n_{z,z}=\begin{cases}
 E^n_{z+g,z+g}, & \text{for } z\in \bZ_{2^{n-1}}^d,\\
        E^n_{z,z}, & \text{for } z\in L(n)\setminus \bZ_{2^{n-1}}^d.
 \end{cases}
 \end{equation}
 
 Define $u_g^n\in M_{r(n)}$ to be 
 \[
u_g^n = w_g^1\otimes w_g^2\otimes \hdots \otimes w_g^{n}.
 \]

 Now, for $g\in \bZ^d$ define 
 \[
 \sigma^{n}_g:C_n\to C_n,\quad\text{and}\quad \beta^{n}_g:B_n\to B_n
 \]
 by 
 \begin{itemize}
 \item $\sigma^{n}_g(f)(x,k)=u_g^n f(x,k-g) (u_g^n)^*$.
 \item $\beta^n_g(h)(y)=u_g^n h(y) (u_g^n)^*$.
 \end{itemize}

Set $\alpha^{n}_g=\sigma^{n}_g\oplus \beta^{n}_g :A_n\to A_n$. This defines an 
action $\alpha^{n}$ of $\bZ^d$ on $A_n$.
\end{enumerate}

\end{ctn}

\begin{prp}\label{L_af_proper_zd}
Assume the notation and choices in Construction~\ref{OurConstruction_zd}.
\begin{enumerate}
\item\label{L_af_proper.a_zd}
 The C*-algebra $A$ is separable, simple, and has stable rank one.
\item\label{L_af_proper.b_zd}
There is a unique action 
$\alpha \colon \mathbb{Z}^d \to \Aut (A)$ 
such that $\alpha = \dirlim \alpha^{n}$. 
\item\label{L_af_proper.c_zd}
$\alpha$ is pointwise outer.
\item\label{L_af_proper.d_zd}
$A\rtimes_{\alpha} \bZ^d$ is simple. 
\setcounter{TmpEnumi}{\value{enumi}}
\end{enumerate}
\end{prp}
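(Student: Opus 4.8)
The plan is to prove the four assertions in the order \ref{L_af_proper.a_zd}, \ref{L_af_proper.b_zd}, \ref{L_af_proper.c_zd}, \ref{L_af_proper.d_zd}, since each builds on the earlier ones. Separability is immediate: each $A_n$ is a finite direct sum of algebras $C(Z)\otimes M_{r(n)}$ with $Z$ compact metrizable, and a countable inductive limit of separable C*-algebras is separable; $A$ is unital because every $\gamma_n$ is unital. For simplicity I would use the standard criterion for inductive limits: it suffices to show that for each $n$ and each nonzero $a=(f,g)\in(A_n)_+$ there is $m>n$ with $\Gamma_{m,n}(a)$ full in $A_m$. The key point is that the point-evaluation blocks in \eqref{def_gamma_n} contribute summands that are \emph{constant} over the spectrum of $A_m$ and that appear, via the cross terms $g(y_n)\otimes E^{n+1}_{\bigstar,\bigstar}$ and $\sum_z f(x_n,z)\otimes E^{n+1}_{z,z}$, on every component of both the $C$- and $B$-summands and across all group coordinates. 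Iterating the coordinate-projection terms $f\circ S_{n,j}$ and $g\circ Q^n_z$ and then evaluating at later base points samples $f$ and $g$ along the orbits $P^{(n)}_{\nu_1}\circ\cdots(x_m)$ and $Q^{(n)}_{\nu_1}\circ\cdots(y_m)$, which are dense in $X_n$ and $Y_n$ by the requirement \eqref{con_simple} built into Construction~\ref{OurConstruction_zd}. Hence for large $m$ some such evaluation is a nonzero constant block, so $\Gamma_{m,n}(a)$ is nowhere zero and therefore full, giving simplicity. Stable rank one then follows from the diagonal form of the $\gamma_n$ — direct sums of point evaluations and of compositions with coordinate projections — together with the known fact that a simple unital AH algebra with diagonal connecting maps has stable rank one.

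For \ref{L_af_proper.b_zd} I would verify the single identity $\Gamma_{n+1,n}\circ\alpha^n_g=\alpha^{n+1}_g\circ\Gamma_{n+1,n}$ for all $g\in\bZ^d$; granting it, the universal property of the inductive limit yields a unique $\alpha_g\in\Aut(A)$ with $\alpha_g\circ\Gamma_{\infty,n}=\Gamma_{\infty,n}\circ\alpha^n_g$, and uniqueness of $\alpha=\dirlim\alpha^n$ holds because $\bigcup_n\Gamma_{\infty,n}(A_n)$ is dense. The identity is a block-by-block check on \eqref{def_gamma_n}. Since $u^{n+1}_g=u^n_g\otimes w^{n+1}_g$, the factor $u^n_g$ is carried through $\id_{M_{r(n)}}$ and cancels, leaving a comparison of the $\bZ_{2^n}^d$-shift $k\mapsto k-g$ with $\Ad(w^{n+1}_g)$. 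By \eqref{relation_ug_Ez}, $\Ad(w^{n+1}_g)$ permutes precisely the blocks $E^{n+1}_{z,z}$ with $z\in\bZ_{2^n}^d$ via $z\mapsto z+g$ and fixes the $\bigstar$- and projection-blocks; this matches the reindexing of the point-evaluation terms produced by the shift, while the terms $f\circ S_{n,j}$ transform correctly because of the equivariance $\pi_{n,n+1}(k-g)=\pi_{n,n+1}(k)-g$. The $B$-component is handled identically, noting that $\beta^{n+1}_g$ carries no group shift.

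The cleanest argument is for \ref{L_af_proper.c_zd}. Fix $g\neq 0$ and pick $N$ with $g\not\equiv 0\pmod{2^N}$, so that translation by $g$ is fixed-point free on $\bZ_{2^n}^d$ for $n\ge N$. Let $p_{n,0}\in C_n$ be the central projection of the component $X_n\times\{0\}$ and set $b_n=\Gamma_{\infty,n}(p_{n,0})$. As $p_{n,0}$ is central in $A_n$, we get $[b_n,\Gamma_{\infty,m}(a)]=0$ for $m\le n$, so $(b_n)_n$ is a central sequence of nonzero projections; moreover $\alpha_g(b_n)=\Gamma_{\infty,n}(p_{n,g})$ by \ref{L_af_proper.b_zd}, whence $b_n\,\alpha_g(b_n)=\Gamma_{\infty,n}(p_{n,0}p_{n,g})=0$. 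In the central sequence algebra $A_\omega\cap A'$, with $\omega$ a free ultrafilter on $\bN$, the class $P=[(b_n)]$ is thus a nonzero projection with $P\,\alpha_g^\omega(P)=0$. If $\alpha_g=\Ad(v)$ were inner, then $\alpha_g^\omega$ would act as the identity on $A_\omega\cap A'$, giving $P=\alpha_g^\omega(P)$ and hence $P=P^2=P\,\alpha_g^\omega(P)=0$, a contradiction. So every $\alpha_g$ with $g\neq 0$ is outer.

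Finally, \ref{L_af_proper.d_zd} follows from \ref{L_af_proper.a_zd} and \ref{L_af_proper.c_zd}: $A$ is simple, $\alpha$ is pointwise outer, and $\bZ^d$ is amenable, so the full and reduced crossed products agree and the standard simplicity theorem for crossed products of simple C*-algebras by pointwise outer actions of discrete groups (Kishimoto) applies. I expect the main obstacle to be simplicity: because the point evaluations in \eqref{def_gamma_n} are routed across the branches of the tree and between the $X$- and $Y$-systems, the bookkeeping that guarantees one nonzero element reaches every point of every later spectrum is the delicate step, whereas the compatibility and outerness arguments are structurally clean once the block picture of \eqref{def_gamma_n} and \eqref{relation_ug_Ez} is in hand.
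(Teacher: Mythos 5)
Your proposal is correct and follows essentially the same route as the paper: simplicity via fullness of images using the dense-orbit condition (the paper cites the criterion from Dadarlat--Nagy--N\'emethi--Pasnicu rather than reproving it), stable rank one from the diagonal form of the connecting maps, the same block-by-block intertwining computation for part (2), the same orthogonal central projections $p_{n,0}$ for outerness (the paper runs the norm estimate directly rather than passing to the central sequence algebra, but it is the same argument), and Kishimoto for part (4).
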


\begin{proof}
We prove (\ref{L_af_proper.a_zd}). 
Separability of $A$ is immediate.
Simplicity follows from Proposition 2.1(ii) of \cite{DNN92}, because of the choice of points in Construction~\ref{OurConstruction_zd}(\ref{Constuction_choice_of_points}). 
Since the direct system in
Construction~\ref{OurConstruction_zd}(\ref{OurConstruction.8_zd})
has diagonal maps in the sense of Definition~2.1 of~\cite{ElHoTm}, it follows from Theorem~4.1 of~\cite{ElHoTm}
that $A$ has stable rank one.

To prove (\ref{L_af_proper.b_zd}), it is enough to show that for $g\in\bZ^d$, 
we have
\begin{equation}\label{action_well_defined}
\alpha^{n+1}_g\circ \Gamma_{n+1,n}=\Gamma_{n+1,n}\circ\alpha^{n}_g.
\end{equation}
Fix a point $(x,k)\in X_{n+1}\times\bZ_{2^{n+1}}^d$ and a point $y\in Y_{n+1}$. 
It 
suffices to show that if
$a,b\in M_{r(n)}$, $f\in C(X_n\times 
\bZ_{2^n}^d)$ and $h\in C(Y_n)$, then
 (\ref{action_well_defined}) holds for $a\otimes 
f\in C_n$ and $b\otimes h\in B_n$. First, observe that for $z\in \{1,2,\hdots, 
d(n+1)\}$, we have
\begin{equation}\label{action_snz_qnz}
\begin{split}
\sigma_g^n(a\otimes f)\circ S_{n,z}\otimes E_{z,z}^{n+1}=& 
\sigma_g^{n+1}\left(a\otimes f\circ S_{n,z}\otimes E_{z,z}^{n+1}\right) ,
 \\
\beta_g^n(b\otimes h)\circ Q^n_z\otimes E_{z,z}^{n+1}=& \beta_g^{n+1}\left(b\otimes h\circ Q^n_z\otimes E_{z,z}^{n+1}\right).
\end{split}
\end{equation}
The second equation is trivial. For the first equation, since 
$\pi_{n,n+1}(k+g)=\pi_{n,n+1}(k)+g$, we get
\begin{align*}
&\quad\,\,\sigma_g^n(a\otimes f)\circ S_{n,z}(x,k)\otimes E_{z,z}^{n+1} 
\\&=\sigma_g^n(a\otimes f)(P_z^{(n)}(x), \pi_{n+1,n}(k))\otimes E_{z,z}^{n+1}\\
&= u_g^n(a\otimes f(P_z^n(x),\pi_{n+1,n}(k)-g))(u_g^n)^*\otimes E_{z,z}^{n+1}\\
&\numeq{\ref{relation_ug_Ez}} u_g^n(a\otimes f(P_z^n(x),\pi_{n+1,n}(k)-g))(u_g^n)^*\otimes w_g^{n+1}E_{z,z}^{n+1}(w_g^{n+1})^*
\\&= u_g^{n+1}(a\otimes f\otimes E_{z,z}^{n+1}(P_z^n(x),\pi_{n+1,n}(k)-g))(u_g^{n+1})^*
\\
&=\sigma_g^{n+1}\left(a\otimes f\circ S_{n,z}\otimes E_{z,z}^{n+1}\right)(x,k)
\end{align*}
Now, let us prove (\ref{action_well_defined}):
{\allowdisplaybreaks
\begin{align*}
&\Gamma_{n+1,n}\circ\alpha_g^{n}(a\otimes f,b\otimes h)
=\Gamma_{n+1,n}\left(\sigma_g^n(a\otimes f), \beta_g^n(b\otimes h)\right) \\
=& \Bigg(\sum_{z\in \bZ_{2^n}^d} \sigma_g^n(a\otimes 
f)(x_n,z)\otimes E^{n+1}_{z,z} + \beta^n_g(b\otimes h)(y_n)\otimes 
E^{n+1}_{\bigstar,\bigstar}
\\ & \quad +\sum_{z=1}^{d(n+1)} \sigma^n_g(a\otimes f)\circ S_{n,z}\otimes 
E^{n+1}_{z,z},\\ 
 &  \sum_{z\in \bZ_{2^n}^d} \sigma^n_g(a\otimes f)(x_n,z)\otimes 
 E^{n+1}_{z,z}+ \beta_g^n(b\otimes h)(y_n)\otimes E^{n+1}_{\bigstar,\bigstar}\\ 
 &\quad + \sum_{z=d'(n+1)+1}^{d(n+1)} \beta^n_g(b\otimes h)(y_n)\otimes E^{n+1}_{z,z} + 
 \sum_{z=1}^{d'(n+1)}\beta^n_g(b\otimes h)\circ Q_z^n\otimes 
 E^{n+1}_{z,z}\Bigg)\\
 = & \Bigg(\sum_{z\in \bZ_{2^n}^d} u_g^n(a\otimes 
 f)(x_n,z-g)(u_g^n)^*\otimes E^{n+1}_{z,z} + u_g^n(b\otimes 
 h)(y_n)(u_g^n)^*\otimes E^{n+1}_{\bigstar,\bigstar}
\\& \quad + \sum_{z=1}^{d(n+1)} \sigma_g^{n+1}(a\otimes f\circ S_{n,z}\otimes 
E^{n+1}_{z,z}),\\ 
 & \sum_{z\in \bZ_{2^n}^d} u_g^n(a\otimes 
 f)(x_n,z-g)(u_g^n)^*\otimes E^{n+1}_{z,z}+ u_g^n(b\otimes 
 h)(y_n)(u_g^n)^*\otimes E^{n+1}_{\bigstar,\bigstar}\\ & \quad  
 +\sum_{z=d'(n+1)+1}^{d(n+1)} 
 u^n_g(b\otimes h)(y_n)(u_g^n)^*\otimes E^{n+1}_{z,z} + 
 \sum_{z=1}^{d'(n+1)}\beta^{n+1}_g(b\otimes h\circ Q_z^n\otimes 
 E^{n+1}_{z,z})\Bigg)\\
 =&  \Bigg(\sum_{z\in \bZ_{2^n}^d} u_g^n a (u_g^n)^*\otimes 
 f(x_n,z)\otimes E^{n+1}_{z+g,z+g} + u_g^n b(u_g^n)^*\otimes h(y_n)\otimes 
 E^{n+1}_{\bigstar,\bigstar}
\\& \quad  + \sum_{z=1}^{d(n+1)} \sigma_g^{n+1}(a\otimes f\circ S_{n,z}\otimes 
E^{n+1}_{z,z}),\\ 
 & \sum_{z\in \bZ_{2^n}^d} u_g^n a (u_g^n)^*\otimes 
 f(x_n,z)\otimes E^{n+1}_{z+g,z+g}+ u_g^n b (u_g^n)^*\otimes h(y_n)\otimes 
 E^{n+1}_{\bigstar,\bigstar}\\ &\quad  +\sum_{z=d'(n+1)+1}^{d(n+1)} u^n_g b 
 (u_g^n)^*\otimes 
 h(y_n)\otimes E^{n+1}_{z,z} + \sum_{z=1}^{d'(n+1)}\beta^{n+1}_g(b\otimes 
 h\circ Q_z^n\otimes E^{n+1}_{z,z})\Bigg) \\ 
 =&  \Bigg(\sum_{z\in \left(\bZ_{2^n}\right)^d} u_g^{n+1}(a \otimes 
 f(x_n,z)\otimes E^{n+1}_{z,z})(u_g^{n+1})^* + u_g^{n+1} (b\otimes 
 h(y_n)\otimes E^{n+1}_{\bigstar,\bigstar})(u_g^{n+1})^*
\\&\quad  + \sum_{z=1}^{d(n+1)} \sigma_g^{n+1}(a\otimes f\circ S_{n,z}\otimes 
E^{n+1}_{z,z}),\\ 
 & \sum_{z\in \left(\bZ_{2^n}\right)^d} u_g^{n+1}(a \otimes f(x_n,z)\otimes 
 E^{n+1}_{z,z})(u_g^{n+1})^*+ u_g^{n+1} (b\otimes h(y_n)\otimes 
 E^{n+1}_{\bigstar,\bigstar})(u_g^{n+1})^*\\ & \quad +\sum_{z=d'(n+1)+1}^{d(n+1)} 
 u^{n+1}_g (b 
 \otimes h(y_n)\otimes E^{n+1}_{z,z})(u_g^{n+1})^* + 
 \sum_{z=1}^{d'(n+1)}\beta^{n+1}_g(b\otimes h\circ Q_z^n\otimes 
 E^{n+1}_{z,z})\Bigg).\\
 =& \Bigg(\sigma_g^{n+1}\Big(\sum_{z\in \left(\bZ_{2^n}\right)^d}a \otimes 
f(x_n,z)\otimes E^{n+1}_{z,z}+ b\otimes h(y_n)\otimes 
E^{n+1}_{\bigstar,\bigstar}
\\&\quad + \sum_{z=1}^{d(n+1)}a\otimes f\circ S_{n,z}\otimes E^{n+1}_{z,z}\Big),\\ 
 & \beta_{n+1}^g\Big(\sum_{z\in \left(\bZ_{2^n}\right)^d} a \otimes 
 f(x_n,z)\otimes E^{n+1}_{z,z}+ b\otimes h(y_n)\otimes 
 E^{n+1}_{\bigstar,\bigstar}\\ & 
 \quad +\sum_{z=d'(n+1)+1}^{d(n+1)} b \otimes h(y_n)\otimes E^{n+1}_{z,z} + 
 \sum_{z=1}^{d'(n+1)}b\otimes h\circ Q_z^n\otimes E^{n+1}_{z,z}\Big)\Bigg).\\
  = & \alpha_g^{n+1}\Bigg\{\Bigg(\sum_{z\in \left(\bZ_{2^n}\right)^d}a \otimes 
 f(x_n,z)\otimes E^{n+1}_{z,z}+ b\otimes h(y_n)\otimes 
 E^{n+1}_{\bigstar,\bigstar}
\\& \quad + \sum_{z=1}^{d(n+1)}a\otimes f\circ S_{n,z}\otimes E^{n+1}_{z,z},\\ 
 & \sum_{z\in \left(\bZ_{2^n}\right)^d} a \otimes f(x_n,z)\otimes 
 E^{n+1}_{z,z}+ b\otimes h(y_n)\otimes E^{n+1}_{\bigstar,\bigstar}\\ & 
\quad +\sum_{z=d'(n+1)+1}^{d(n+1)} b \otimes h(y_n)\otimes E^{n+1}_{z,z} + 
 \sum_{z=1}^{d'(n+1)}b\otimes h\circ Q_z^n\otimes E^{n+1}_{z,z}\Bigg)\Bigg\}.\\
 & \numeq{\ref{def_gamma_n}} \alpha_g^{n+1}\circ \Gamma_{n+1,n}(a\otimes f,b\otimes h).
 \end{align*}
}

Thus, this sequence of automorphisms is consistent with the connecting maps, 
and hence defines an automorphism  
$\alpha:A\rightarrow A$ on the direct limit.

For (\ref{L_af_proper.c_zd}), suppose there exists $g\in \bZ^d \smallsetminus 
\{0\}$ such that $\alpha_g$ is inner. 
Let $v$ be a unitary such that $\alpha_g = \Ad(v)$. For $n \in \bN$, let 
\[
p_n \in C_n \cong  C(X_n\times \bZ_{2^n}^d)\otimes M_{r(n)} \subseteq A_n
\]
 be the 
central projection given by
\[
p_n(x,k)=\delta_{0,k}\cdot 1_{M_{r(n)}}.
\] 
We identify $p_n$ with its image in $A$ in the direct limit. Because $p_n \in 
Z(A_n)$ for all $n$, we have $\| vp_nv^* - p_n \| \underset{n \to 
\infty}{\longrightarrow}0$. 
On the other hand, if $g \not \in 2^n\bZ^d$, then $\alpha_g(p_n)$ is orthogonal 
to $p_n$, so $\| \alpha_g(p_n) - p_n \| = 1$ for all but finitely many $n$'s. 
This is a contradiction.

Part (\ref{L_af_proper.d_zd}) follows from Theorem 3.1 of \cite{Ks1}, using the 
fact that $A$ is simple and $\alpha$ is pointwise outer.

\end{proof}
\section{Computation of the radius of comparison of $A$}\label{sec:rc(A)}

In this section, we compute the radius of comparison of the AH algebra 
constructed in Section~\ref{sec:construction_algebra}.

\begin{ntn}\label{notation}
Let $L$ be the tautological line bundle over $S^2\cong \mathbb{C}\mathbb{P}^1$ 
and let $b\in C(S^2,M_2)$ denote the Bott projection, that is, the projection 
onto the section space of $L$.  For $n\geq 0$ we define a projection \[p_n\in 
M_{2h_ns(n)}\left(C\left( (S^2)^{h_ns(n)},M_{r(n)}\right)\right)\] as follows. 
For $i\in\{1,2,\hdots h_ns(n)\}$, let 
$R_i:\left(S^2\right)^{h_ns(n)}\rightarrow S^2$ be the $i$-th coordinate 
projection. Let $e\in M_2(\mathbb{C})$ be a trivial rank one projection. 

Define 
\begin{eqnarray*}p_n:=\text{diag}(b\circ R_1,b\circ R_2,\hdots, b\circ R_{h_ns(n)}, 
\underbrace{e,e,e,\hdots e}_{h_ns(n)\left(r(n)-1\right)}).\end{eqnarray*} 

Then $p_n$ is a 
direct sum of projection of rank $h_ns(n)$ onto the section space of the 
Cartesian product $L^{\times h_ns(n)}$ and a trivial bundle of rank 
$h_ns(n)r(n)-h_ns(n)$. Let \[q_n\in M_{2h_ns(n)}\left(C\left( 
(S^2)^{h'_ns(n)},M_{r(n)}\right)\right)\] be a constant projection of rank 
$h_ns(n)r(n)$. 

Denote by
$\overline{p}_n$ the image of $1_{C(\bZ_{2^n}^d)} \otimes p_n$ under the 
identification
\begin{eqnarray*}
 C(\bZ_{2^n}^d) \otimes 
M_{2h_ns(n)}\left(C\left( (S^2)^{h_ns(n)},M_{r(n)}\right)\right)\cong 
M_{2h_ns(n)}\left(C\left( (S^2)^{h_ns(n)}\times 
\mathbb{Z}_{2^n}^d,M_{r(n)}\right)\right).
 \end{eqnarray*}
 
Then for $m\geq n\geq 0$ we have 
\begin{eqnarray*}\left(\text{id}_{M_{2h_ns(n)}}\otimes 
\Gamma_{m,n}\right)(\overline{p}_n,q_n)=(\overline{p}_{m,n},q_{m,n}).
\end{eqnarray*}

Write 
$\overline{p}_{m,n}=\overline{p}_{m,n}^{(1)}\oplus \overline{p}_{m,n}^{(2)}$, 
where 
on 
each connected component of $X_m\times\mathbb{Z}_{2^m}^d$, 
$\overline{p}_{m,n}^{(1)}$ is a projection of rank $h_ns(m)$ onto the section 
space 
of the Cartesian product $L^{\times h_ns(m)}$ and $\overline{p}_{m,n}^{(2)}$ is 
a 
trivial bundle of rank $h_ns(n)r(m)-h_ns(m)$. 
\end{ntn}


We recall the following fact.

\begin{rmk}\label{rem}
The Cartesian product $L^{\times k}$ does not embed in a trivial bundle over $(S^2)^k$ of rank less than $2k$ (see \cite[Lemma 1.9]{HP19}).
\end{rmk}

The following lemma is a fairly standard lower bound for the 
rank of a trivial bundle which contains a direct product of copies of the 
tautological line bundle; see for instance \cite[Corollaries 6.13 and 
6.20]{AGP19} for similar estimates.

 \begin{lem}\label{lem_sec5_rankbound_}
	Let $n\in\{0,1,2,\ldots\}$ be fixed. Let $m\geq n\in\bZ$. Let 
	$e=(e_1,e_2)$ 
	be a 
	projection in $M_{\infty}(A_m)$ such that $e_1$ is trivial. If there exists 
	$x\in M_\infty(A_m)$ such that \[\Vert 
	xex^*-(\overline{p}_{m,n},q_{m,n})\Vert<\frac{1}{2},\] then 
	\[\text{rank}(e_1)\geq h_ns(n)r(m)+h_ns(m).\]
\end{lem}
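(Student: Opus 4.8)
The plan is to convert the approximate-domination hypothesis into an honest Murray--von Neumann subequivalence of projections, restrict to a connected component, and then extract the rank bound from a characteristic-class obstruction that refines Remark~\ref{rem}. Throughout I abbreviate $k=h_ns(m)$ and $R=h_ns(n)r(m)$, so that the desired conclusion reads $\operatorname{rank}(e_1)\geq R+k$.

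First I would record the standard Cuntz-theoretic fact that if a positive element $a$ satisfies $\|a-p\|<1$ for a projection $p$, then $p\lesssim a$: one checks that $pap$ is invertible in the corner $pAp$, whence $p\lesssim pap\lesssim a$. Applying this with $a=xex^{*}$ and $p=(\overline{p}_{m,n},q_{m,n})$, together with $xex^{*}\lesssim e$, gives $(\overline{p}_{m,n},q_{m,n})\lesssim e$. Since $A_m=C_m\oplus B_m$ and Cuntz comparison in a direct sum is detected coordinatewise, this yields $\overline{p}_{m,n}\lesssim e_1$ in $M_\infty(C_m)$, where $C_m\cong C(X_m\times\bZ_{2^m}^d,M_{r(m)})$. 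As $\overline{p}_{m,n}$ and $e_1$ are projections, Cuntz subequivalence upgrades to Murray--von Neumann subequivalence, so $\overline{p}_{m,n}$ is equivalent to a subprojection $f\leq e_1$.

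Next I would fix a connected component $Z$ of $X_m\times\bZ_{2^m}^d$ (these are the sets $X_m\times\{t\}$, each a copy of $(S^2)^{h_ms(m)}$) and restrict everything to $Z$. Writing $N=\operatorname{rank}(e_1)$, the projection $e_1|_Z$ corresponds to the trivial bundle of rank $N$, while $\overline{p}_{m,n}|_Z$ corresponds to $L^{\times k}$ direct sum a trivial bundle of rank $R-k$. Complementing $f|_Z$ inside $e_1|_Z$ produces a bundle $W$ of rank $N-R$ satisfying
\[
L^{\times k}\oplus(\text{trivial of rank }R-k)\oplus W\;\cong\;\text{trivial of rank }N.
\]

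The final and crucial step is the characteristic-class computation. Since $L$ has total Chern class $1+x$ with $x$ the generator of $H^2(S^2)$ and $x^2=0$, the bundle $L^{\times k}$ has total Chern class $\prod_{i=1}^{k}(1+x_i)$, whose top component $x_1\cdots x_k\in H^{2k}(Z)$ is nonzero. Because trivial bundles have trivial total Chern class, inverting the displayed isomorphism gives $c(W)=\prod_{i=1}^k(1-x_i)$, whose degree-$2k$ part $(-1)^k x_1\cdots x_k$ equals $\pm c_k(W)$ and is nonzero; note that the trivial summand of rank $R-k$ contributes nothing here. Since Chern classes of $W$ vanish above its rank $N-R$, nonvanishing of $c_k(W)$ forces $k\leq N-R$, i.e.\ $\operatorname{rank}(e_1)=N\geq R+k=h_ns(n)r(m)+h_ns(m)$. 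I expect this last point to be the main obstacle: a naive application of Remark~\ref{rem} to $L^{\times k}\hookrightarrow e_1$ only yields $N\geq 2k$, and getting the sharp bound requires observing that, once the characteristic-class-trivial free summand of rank $R-k$ is accounted for, the obstructing Chern class is supported on the strictly lower-rank complement $W$.
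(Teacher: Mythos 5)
Your proof is correct, and while the opening moves match the paper's (pass from the norm estimate to Cuntz subequivalence, then to Murray--von Neumann subequivalence of $\overline{p}_{m,n}$ into $e_1$, then restrict to a product of spheres), the endgame is genuinely different. The paper restricts further to the sub-product $X_{m,n}=(S^2)^{h_ns(m)}\times\{z_0\}$ precisely so that the bundles involved have rank at least half the dimension of the base; it then invokes the stable cancellation theorem \cite[Theorem 9.1.5]{HusB} to cancel the trivial summand $\overline{p}_{m,n}^{(2)}$ and reduce to the statement that $L^{\times h_ns(m)}$ does not embed in a trivial bundle of rank less than $2h_ns(m)$ (Remark~\ref{rem}, i.e.\ \cite[Lemma 1.9]{HP19}). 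You instead work on the full component $(S^2)^{h_ms(m)}$, form the complementary bundle $W$ of $f|_Z$ inside the trivial bundle $e_1|_Z$, and read off $c_k(W)=(-1)^k x_1\cdots x_k\neq 0$ from multiplicativity of the total Chern class, which forces $\operatorname{rank}(W)\geq k$ directly. This avoids both the restriction to $X_{m,n}$ and the cancellation theorem, essentially inlining the Chern-class obstruction that underlies \cite[Lemma 1.9]{HP19} and applying it to the complement rather than to an embedded copy of $L^{\times k}$; the paper's route is more modular (it reuses the quoted lemma as a black box) but needs the extra stable-range bookkeeping, whereas yours is shorter and self-contained. Both arguments yield the same sharp bound $\operatorname{rank}(e_1)\geq h_ns(n)r(m)+h_ns(m)$.
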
 

\begin{proof}
	The assumption	
	 $\Vert xex^*-(\overline{p}_{m,n}, q_{m,n})\Vert<\frac{1}{2}$ implies that
	\begin{equation}
		\begin{split}
			\Vert \overline{p}_{m,n}\left(xex^*|_{X_m\times\bZ_{2^m}^d}\right)\overline{p}_{m,n}-\overline{p}_{m,n}\Vert<\frac{1}{2}.
		\end{split}
	\end{equation}
	
	Recall that $X_m \cong (S^2)^{h_m s(m)} \cong   (S^2)^{h_n s(m)} \times  
	(S^2)^{(h_m -h_n) s(m)}$. Fix some point $z_0 \in (S^2)^{(h_m -h_n) 
	s(m)}$. 
	Set 
	\[
	X_{m,n} = (S^2)^{h_ns(m)}\times \{z_0\} \subseteq X_m .
	\] 
	
	Then
	\begin{equation}
		\begin{split}
			\Vert \overline{p}_{m,n}|_ {X_{m,n}\times \bZ_{2^m}^d}\left(xex^*|_{X_{m,n}\times\bZ_{2^m}^d}\right)\overline{p}_{m,n}|_{X_{m,n}\times \bZ_{2^m}^d}-\overline{p}_{m,n}|_{X_{m,n}\times \bZ_{2^m}^d}\Vert<\frac{1}{2}.
		\end{split}
	\end{equation}
	
	Since $\overline{p}_{m,n}$ and $e$ are projections, we get
	\[\overline{p}_{m,n}|_{X_{m,n}\times \bZ_{2^m}^d}\lesssim 
	e|_{X_{m,n}\times\bZ_{2^m}^d}=e_1|_{X_{m,n}\times\bZ_{2^m}^d}.\] 
	Therefore, 
	there exists a projection 
	\[q\in M_{\infty}(C(X_{m,n}\times \bZ_{2^m}^d,M_{r(m)}))\] such that
	\[
	\overline{p}_{m,n}|_{X_{m,n}\times \bZ_{2^m}^d}+q\approx 
	e_1|_{X_{m,n}\times \bZ_{2^m}^d}.
	\]
	Let 
	\[e_0\in M_{\infty}(C(X_{m,n}\times \bZ_{2^m}^d,M_{r(m)}))\] be a trivial 
	projection whose rank is 
	\[\text{rank}(e_1|_{X_{m,n}\times 
	\bZ_{2^m}^d})-\text{rank}(\overline{p}_{m,n}^{(2)}|_{X_{m,n}\times\bZ_{2^m}^d}),\]
	 and such that $e_0\perp 
	\overline{p}_{m,n}^{(2)}|_{X_{m,n}\times\bZ_{2^m}^d}$, then 
	\[\overline{p}_{m,n}^{(2)}|_{X_{m,n}\times\bZ_{2^m}^d}+e_0\approx 
	e_1|_{X_{m,n}\times \bZ_{2^m}^d}.\]
	 
	 Hence 
	\begin{equation}
		\begin{split}
			\overline{p}_{m,n}^{(1)}|_{X_{m,n}\times\bZ_{2^m}^d}+
			\overline{p}_{m,n}^{(2)}|_{X_{m,n}\times\bZ_{2^m}^d}+q\approx
			 \overline{p}_{m,n}^{(2)}|_{X_{m,n}\times\bZ_{2^m}^d}+e_0.
		\end{split}
	\end{equation} 
	
	Now, 
	$\text{rank}(\overline{p}_{m,n}^{(1)}|_{X_{m,n}\times\bZ_{2^m}^d}+q) 
	\geq h_ns(m)$. 
	Thus, by \cite[Theorem 9.1.5]{HusB}, we get that 
	$\overline{p}_{m,n}^{(1)}|_{X_{m,n}\times\bZ_{2^m}^d}\lessapprox e_0$, and 
	hence as a consequence of Remark  \ref{rem}, we have $\text{rank}(e_0)\geq 
	2h_ns(m)$. Since 
	\[\overline{p}_{m,n}^{(2)}|_{X_{m,n}\times\bZ_{2^m}^d}+e_0\approx 
	e_1|_{X_{m,n}\times \bZ_{2^m}^d},\] we get 
	\begin{align*}
	\begin{split}
	\text{rank}(e_1) =\text{rank}(e_1|_{X_{m,n}\times \bZ_{2^m}^d})&\geq 
	2h_ns(m)+h_ns(n)r(m)-h_ns(m)\\
	&=h_ns(n)r(m)+h_ns(m).
	\end{split}
	\end{align*}
\end{proof}

\begin{prp}\label{rad_algebra}
Assume the notation and choices in Construction~\ref{OurConstruction_zd}.
Then  $\rc (A)=r$.
\end{prp}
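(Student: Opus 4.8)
The plan is to prove the two inequalities $\rc(A)\ge r$ and $\rc(A)\le r$ separately; the second is only needed when $r<\infty$.

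For the lower bound I would exploit the projections $(\overline{p}_{m,n},q_{m,n})$ from Notation~\ref{notation}, whose crucial feature is that $d_\tau((\overline{p}_{m,n},q_{m,n}))=h_ns(n)$ for \emph{every} $\tau\in T(A)$, independently of $m$, since both components have constant fibre-rank $h_ns(n)r(m)$. Fix $n\ge 1$ (so that $h_n=h$ in the finite case) and let $s<h_n\kappa=r$. I want to produce a trivial projection $\theta=(\theta_1,\theta_2)\in M_\infty(A_m)$ witnessing failure of $s$-comparison. First one checks that trivial projections are carried to trivial projections by the connecting maps, and that the normalised fibre-dimensions $(\delta_1^{(k)},\delta_2^{(k)})$ of the two components evolve under $\Gamma_{k+1,k}$ by a row-stochastic $2\times 2$ transition matrix; in particular $\max(\delta_1^{(k)},\delta_2^{(k)})$ is non-increasing in $k$. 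Combining Lemma~\ref{lem_sec5_rankbound_} with the lower semicontinuity of Cuntz comparison in the direct limit and injectivity of the connecting maps, I would argue that $(\overline{p}_{m,n},q_{m,n})\lesssim\theta$ in $A$ forces $\delta_1^{(m')}\ge h_ns(n)+h_ns(m')/r(m')$ at some level $m'\ge m$, whence $\max(\delta_1,\delta_2)>h_ns(n)+h_n\kappa$ because $s(m')/r(m')>\kappa$. Choosing $\theta$ with $\delta_1=\delta_2$ strictly between $h_ns(n)+s$ and $h_ns(n)+h_n\kappa$ (possible for $m$ large since $r(m)\to\infty$) then gives $d_\tau(\theta)>d_\tau((\overline{p}_{m,n},q_{m,n}))+s$ for all $\tau$ while $(\overline{p}_{m,n},q_{m,n})\not\lesssim\theta$. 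Letting $s\uparrow r$ (and, when $r=\infty$, letting $h_n\to\infty$) yields $\rc(A)\ge r$.

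For the upper bound (so now $r<\infty$) I would show $A$ has $s$-comparison for every $s>r$. The building-block estimate is $\rc(C(Z,M_k))\le\frac{\dim Z}{2k}$, which holds because the point evaluations are traces of $C(Z,M_k)$, so a global dimension gap is a pointwise rank gap, and a pointwise rank gap exceeding $\tfrac12\dim Z$ forces Cuntz subequivalence (the topological comparison results underlying \cite[Cor.~6.13, 6.20]{AGP19} and \cite{HP19}). Here $\frac{\dim X_m}{2r(m)}=\frac{h_ms(m)}{r(m)}\to h\kappa=r$ and $\frac{\dim Y_m}{2r(m)}=\frac{h'_ms'(m)}{r(m)}\to h\kappa'=r'\le r$. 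The difficulty is that the point evaluations at a finite level are \emph{not} traces of the simple limit $A$, so the global gap hypothesis cannot be converted to a pointwise gap at that level. The remedy, and the technical heart of the proof, is to push the elements forward: given $a,b\in M_N(A_m)_+$ with $d_\tau(a)+s<d_\tau(b)$ for all $\tau\in T(A)$, I would use the density of the orbits of the chosen points $x_k,y_k$ (Construction~\ref{OurConstruction_zd}(\ref{Constuction_choice_of_points})) to show that, as $m'\to\infty$, the normalised eigenvalue-counting data of $\Gamma_{m',m}(a)(\omega)$ and $\Gamma_{m',m}(b)(\omega)$ converge, uniformly in $\omega$, to dimension-function values of $a$ and $b$ at traces of $A$. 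Consequently the global gap $s$ becomes, for $m'$ large, a pointwise rank gap exceeding $\tfrac12\dim X_{m'}$ and $\tfrac12\dim Y_{m'}$, since $s>r$ dominates both $\frac{h_{m'}s(m')}{r(m')}$ and $\frac{h'_{m'}s'(m')}{r(m')}$ eventually. The topological comparison theorem then gives $\Gamma_{m',m}(a)\lesssim\Gamma_{m',m}(b)$ in $A_{m'}$, hence $a\lesssim b$ in $A$.

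The main obstacle is precisely this last equidistribution step: controlling the fibrewise rank of the pushed-forward elements uniformly over the base, identifying the limit with honest traces of $A$, and handling general positive elements rather than projections. The residual coordinate-projection fraction $s(m')/r(m')\to\kappa$ of each connecting map is exactly what obstructs comparison below $r$ and governs the surviving deviation of fibrewise rank from the trace, so the two bounds meet precisely at $r$.
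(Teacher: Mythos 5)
Your lower bound is essentially the paper's argument: the same reference projections $(\overline{p}_{m,n},q_{m,n})$ of constant trace $h_ns(n)$, the same appeal to Lemma~\ref{lem_sec5_rankbound_} after bringing a putative Cuntz subequivalence down to a finite stage, and a trivial projection whose normalized rank is wedged strictly between $h_ns(n)+s$ and $h_ns(n)+h_n\kappa\leq h_ns(n)+h_ns(m')/r(m')$, so that the trace gap exceeds $s$ while the rank bound of the lemma is violated. That half is fine (the ``row-stochastic transition matrix'' bookkeeping is unnecessary but harmless), and your treatment of $r=\infty$ via $h_n\to\infty$ matches the paper.

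The upper bound is where there is a genuine gap. The paper obtains $\rc(A)\leq r$ softly: it combines $\rc(C(Z)\otimes M_k)\leq \dim Z/(2k)$ with the semicontinuity of the radius of comparison under inductive limits of residually stably finite algebras, $\rc(A)\leq\liminf_n\rc(A_n)\leq\lim_n h_ns(n)/r(n)=r$, so no fibrewise analysis of pushed-forward elements is ever needed. You instead try to redo the limit passage by hand, and the step you yourself flag as the ``technical heart'' --- that the normalized fibrewise rank of $\Gamma_{m',m}(a)(\omega)$ converges \emph{uniformly in $\omega$} to dimension-function values of $a$ at traces of $A$ --- is false as stated. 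The summands of $\Gamma_{m',m}$ built from coordinate projections contribute a non-vanishing fraction $\prod_{k>m}d(k)/l(k)=\kappa_m>0$ of the total rank, and at a fibre $\omega$ they average $\operatorname{rank}(a(\cdot))$ over the coordinates of $\omega$; choosing $\omega$ with all coordinates near a point where $\operatorname{rank}(a(x))$ is maximal (or minimal) shows the deviation from any trace value persists at scale $\kappa_m$ times the rank variation of $a$. This non-uniformity is precisely the mechanism that makes $\rc(A)>0$ in the first place, and your closing sentence about the ``surviving deviation of fibrewise rank from the trace'' concedes the point, contradicting the uniform convergence you need. As written, the upper bound does not go through; either carry out a genuine quantitative version of that estimate (a substantial undertaking) or replace it by the citation the paper uses.
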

\begin{proof}
We first show that $\rc(A)\leq r$. Using Proposition \ref{prop_roc} and \cite[Corollary 1.2]{EN13}, we see that, 
for $n\in\{0,1,2,\ldots\}$
\[
\rc(C_n)=\frac{1}{r(n)} \rc\left(C(X_n\times \bZ_{2^n}^d)\right)\leq 
\frac{\dim(X_n)}{2r(n)}=\frac{h_ns(n)}{r(n)},
\]
and
\[
\rc(B_n)=\frac{1}{r(n)}\rc \left(C(Y_n)\right)\leq 
\frac{\dim(Y_n)}{2r(n)}=\frac{h'_n s'(n)}{r(n)} .
\]

Hence,
\begin{equation*}
\begin{split}
\rc (A_n)=\max\{\rc(C_n),\rc (B_n)\}\leq\frac{h_n s(n)}{r(n)}.
\end{split}
\end{equation*}

Since $A$ and all the $A_n$, for $n\in\bN$, are residually stably finite, it 
follows from  
\cite[Proposition 3.2.3 and 3.2.4(iii)]{blackadar2012algebraic} (see also \cite[Proposition 2.13]{AA20}) that
\begin{equation}
\begin{split}
\rc (A) &\leq \lim_{n\to\infty}\inf\rc (A_n)
\\
&\leq\lim_{n\to\infty}\frac{h_n s(n)}{r(n)}\\
&=\lim_{n\to\infty}h_n\lim_{n\to\infty}\prod_{j=1}^{n}\frac{d(j)}{d(j)+1+2^{jd-d}}=r
\end{split}
\end{equation}

We now show that $r\leq \rc (A)$. We first consider the case when $0< 
r<\infty$. As in Notation~\ref{notation}, let  
 $p_0\in C(\left(S^2\right)^{h_0}, M_{2{h_0}})$ be the projection of rank $h_0$ 
 whose range is the section 
space of $L^{\times h_0}$. Then 
\[\left(\overline{p}_{n,0}, 
q_{n,0}\right)=\left(\text{id}_{M_{2h_0}}\otimes\Gamma_{n,0}\right)(p_0,q_0),\] and on each of the connected component of $X_n\times 
\bZ_{2^n}^d$, $\overline{p}_{n,0}$ is a direct sum of a projection of rank 
$h_0s(n)$ whose range is the section space of $L^{\times h_0s(n)}$ and a 
trivial projection 
of rank $h_0r(n)-h_0s(n)$.  By Lemma \ref{lem_sec5_rankbound_}, it follows
that for $n\in\{0,1,2,\ldots\}$, if $e=(e_1,e_2)$ is a projection in 
$M_{\infty}(A_n)$ such that $e_1$ is trivial and $x\in M_{\infty}(A_n)$ is such 
that \[\Vert xex^*-(\overline{p}_{n,0},q_{n,0})\Vert<\frac{1}{2},\] then $\text{rank}(e_1)\geq h_0r(n)+h_0s(n)$. Now, 
we fix $0<\rho<r$ and show that $A$ cannot have $\rho$-comparison.  Note that 
\[0<\frac{\rho}{h_0}<\frac{r}{h_0}=\kappa.\] 

Choose $n\in\bN$ such that 
\[\frac{1}{h_0r(n)}<\kappa-\frac{\rho}{h_0}.\] 

Choose $M\in\bN$ such that 
\begin{equation}\label{equ_inequality_sec5_}
	\frac{\rho}{h_0}+1<\frac{M}{h_0r(n)}<\kappa+1.
\end{equation}

Let $e=(e_1,e_2)\in M_{\infty}(A_n)$ be a trivial projection of rank $M$. By a
slight abuse of notation, we use $\Gamma_{m,n}$ to denote the amplified map 
from $M_\infty(A_n)$ to $M_\infty(A_m)$. For $m>n$, the rank of 
$\Gamma_{m,n}(e)$ is $M\cdot \frac{r(m)}{r(n)}$, which by the choice of $M$ is 
strictly less than $h_0r(m)+h_0s(m)$. Now, for any trace $\tau$ on $A_m$ (and 
thus for any trace on $A$), we have
\[\tau\left(\overline{p}_{m,0}, 
q_{m,0}\right)=\frac{h_0r(m)}{r(m)}=h_0.\]

 Using (\ref{equ_inequality_sec5_}), we get
\begin{equation*}
	\begin{split}
		d_\tau\left(\Gamma_{m,n}(e)\right)=\tau\left(\Gamma_{m,n}(e)\right)&
		=\frac{1}{r(m)}\cdot M\cdot \frac{r(m)}{r(n)}\\
		&>h_0 + \rho\\
		&=\tau\left(\overline{p}_{m,0}, 
		q_{m,0}\right)+\rho\\
		&=d_\tau\left(\overline{p}_{m,0}, 
		q_{m,0}\right)+\rho.
	\end{split}
\end{equation*}

Now, if $\Gamma_{\infty,0}(p_0,q_0)\lesssim\Gamma_{\infty,n}(e)$, then, in particular, there exists some $m>n$ and $x\in M_{\infty}(A_m)$ such that 
\[\Vert x\Gamma_{m,n}(e)x^*-\left(\overline{p}_{m,0}, 
q_{m,0}\right)\Vert<\frac{1}{2}.\]

 This implies that $\text{rank}( \Gamma_{m,n}(e) ) \geq h_0r(m)+h_0s(m)$, which 
 is a contradiction. Hence, $\rc (A)\geq r$.

It remains to consider the case $r=\infty$. For 
this, we show that $A$ cannot have $\rho$-comparison for any $\rho \in 
(0,\infty)$. 
Set $c = \inf_{m \geq 0}\frac{s(m)}{r(m)} >0$. Recall that 
$\lim_{n\rightarrow\infty}h_n=\infty$. Fix $n\in\bN$ such that
\begin{equation}
		 h_n  > \rho/c \quad \text{and} \quad 
		0< \frac{1}{r(n)}   < c \cdot h_n - \rho.
\end{equation}

Choose $M\in\bN$ such that 
\begin{equation}\label{equ_inequality_sec6_4}
	\rho+h_ns(n)<\frac{M}{r(n)}<c \cdot h_n + h_n s(n).
\end{equation}

Let $e=(e_1,e_2)\in M_{\infty}(A_n)$ be a trivial projection of rank $M$. By 
slight abuse of notation, we use $\Gamma_{m,n}$ to denote the amplified map 
from $M_\infty(A_n)$ to $M_\infty(A_m)$. For $m\geq n$, the rank of 
$\Gamma_{m,n}(e)$ is $M\cdot\frac{r(m)}{r(n)}$, which by the choice of $M$ is 
strictly less than $r(m)c h_n+ h_nr(m)s(n)$. Now
 \[h_n c r(m)\leq h_n\frac{s(m)}{r(m)}r(m)=h_ns(m).\] 
 Hence, for $m\geq n$, the
 rank of $\Gamma_{m,n}(e)$ is less than $h_ns(m)+h_ns(n)r(m)$.  
 
 Now, for any 
 trace $\tau$ on 
 $A_m$ (and thus for any trace on $A$) \[\tau(\left(\overline{p}_{m,n}, 
 q_{m,n}\right))=\frac{h_ns(n)r(m)}{r(m)}=h_ns(n).\] Moreover, by Inequality 
 \ref{equ_inequality_sec6_}, we get
\begin{equation*}
	\begin{split}
		\tau\left(\Gamma_{m,n}(e)\right)=\frac{1}{r(m)}\cdot M\cdot \frac{r(m)}{r(n)}>h_ns(n)+\rho=\tau(\left(\overline{p}_{m,n}, q_{m,n}\right))+\rho.
	\end{split}
\end{equation*}

Now, if $\Gamma_{\infty,n}(\left(\overline{p}_{n}, q_{n}\right))\lesssim\Gamma_{\infty,n}(e)$, then, in particular, there exists some $m>n$ and $x\in M_{\infty}(A_m)$ such that 

\[\Vert x\Gamma_{m,n}(e)x^*-\left(\overline{p}_{m,n}, q_{m,n}\right)\Vert<\frac{1}{2}.\] 

By Lemma~\ref{lem_sec5_rankbound_},  
 \[\text{rank}(\Gamma_{m,n}(e))\geq h_ns(n)r(m)+h_ns(m),\] which is a 
 contradiction. Hence, $\rc (A)\geq \rho$, as claimed.

\end{proof}

\section{Computation of the radius of comparison of $A\rtimes_{\alpha}\bZ^d$}\label{sec:rc-crossed-product}

In this section, we compute the radius of comparison of 
$A\rtimes_{\alpha}\bZ^d$, for the AH algebra $A$ and the action $\alpha$ as described in Section~\ref{sec:construction_algebra}.

Let $z\in C(\mathbb{T})$ be the function $z(\zeta) = \zeta$ for
all $\zeta$. Set 
\[
z_n= \underbrace{\begin{pmatrix}
		0 & 0 & \cdots & 0 & 0 & z \\
		1 & 0 & \cdots & 0 & 0 & 0 \\
		0 & 1 & \cdots & 0 & 0 & 0  \\
		\vdots  & \vdots  & \ddots & \vdots & \vdots & \vdots \\
		0 & 0 & \cdots & 1 & 0 & 0\\
		0 & 0 & \cdots & 0 & 1 & 0 
\end{pmatrix}}_{2^n\times 2^n} \in M_{2^n} \otimes C(\mathbb{T}) 
\]
and for $g = (k_1,k_2,\ldots,k_d) \in \bZ^d$, define $z_n^g = z_n^{k_1} \otimes 
z_n^{k_2} \otimes \cdots \otimes z_n^{k_1}$; we identify $z$ with $z_0$. 

For $n\in\bN $, let $\widetilde{C_n}=C_n\rtimes_{\sigma_n}\bZ^d$, and 
$\widetilde{B_n}=B_n\rtimes_{\beta_n}\bZ^d$. Then as in Examples 9.6.4 and 
9.6.11, and Exercises 9.6.26 and 9.6.27 of \cite{GKPT18}, we conclude that 
\begin{equation}
\begin{split}
\widetilde{C_n}\cong  M_{r(n)}\left(C(X_n)\right)&\otimes M_{2^{nd}}\otimes C(\mathbb{T}^d),\,\text{and}\\
\widetilde{B_n}\cong  M_{r(n)}\left(C(Y_n)\right)&\otimes C(\mathbb{T}^d),
\end{split}
\end{equation}
via the following isomorphisms. Use 
$(\lambda_g)_{g \in \bZ^d}$ for the canonical unitary 
$\widetilde{C_n} \oplus \widetilde{B_n}$. 
Define
\[
\psi_n \colon \widetilde{C_n} \oplus \widetilde{B_n} \to    
M_{r(n)}\otimes C(X_n) \otimes M_{2^{nd}}\otimes C(\mathbb{T}^d)
\oplus  M_{r(n)}\otimes C(Y_n) \otimes C(\mathbb{T}^d)
\]
as follows. Identify $M_{2^{nd}}$ with $B(\ell^2(\bZ_{2^n}^d))$, and identify 
$C(\bZ_{2^n}^d)$ with the diagonal operators in $B(\ell^2(\bZ_{2^n}^d)$. Let 
$\iota_n \colon C(\bZ_{2^n}^d) \to B(\ell^2(\bZ_{2^n}^d))$ be this inclusion.
For $a \in C_n \cong M_{r(n)} \otimes C(X_n) \otimes C(\bZ_{2^n}^d)$ and $b \in 
B_n \cong M_{r(n)} \otimes C(Y_n)$, we have \[\psi_n (a \oplus b) = 
\id_{M_{r(n)} \otimes C(X_n)} \otimes \iota_n (a) \oplus b .\]
For $g \in \bZ^d$, we have 
\[
\psi_n (\lambda_g) = ( u_g^n \otimes 1_{X_n}\otimes 1_{M_{2^{nd}}} \otimes 
z_n^g ) \oplus ( u_g^n \otimes 1_{Y_n} \otimes z_n^g ) .
\]

Let $\Gamma_{n+1,n}\rtimes \bZ^d$ be the corresponding induced map of 
$\Gamma_{n+1,n}$ at the level of crossed products, and let 
$\tilde{\Gamma}_{n+1,n}$ 
be the unital map such that the diagram below commutes

\xymatrix{
*\txt{$\widetilde{C_n}\oplus \widetilde{B_n}$}
\ar[d]^-{\psi_n}\ar[rr]^-{\txt{$\Gamma_{n+1,n}\rtimes\mathbb{Z}^d$}} 
&&*\txt{$\,\widetilde{C_{n+1}}\oplus 
\widetilde{B_{n+1}}$}\ar[d]^-{\psi_{n+1}}\\ 
*\txt{$M_{r(n)}\left(C(X_n)\right)\otimes M_{2^{nd}}\otimes C(\mathbb{T}^d)$\\ $\oplus$\\ $M_{r(n)}\left(C(Y_n)\right)\otimes C(\mathbb{T}^d)$}
\ar[rr]^-{\txt{$\tilde{\Gamma}_{n+1,n}$}}
&&*\txt{$\,M_{r(n+1)}\left(C(X_{n+1})\right)\otimes M_{2^{dn+d}}\otimes C(\mathbb{T}^d)$\\ $\oplus$\\ $M_{r(n+1)}\left(C(Y_{n+1})\right)\otimes C(\mathbb{T}^d)$.}}

\vspace*{0.3cm}

Then,
\[
A\rtimes_{\alpha}\bZ^d 
\cong\displaystyle{\lim_{\longrightarrow}} 
\left\{M_{r(n)} \left(C(X_n)\right)\otimes M_{2^{dn}}\otimes 
C(\mathbb{T}^d)\oplus M_{r(n)}\left(C(Y_n)\right)\otimes  
C(\mathbb{T}^d)\right\}.
\]

%



For $f\in M_{r(n)}\left (C(X_n)\right)$ and for $g\in 
M_{r(n)}\left(C(Y_n)\right)$, 
we have

\begin{equation}\label{eq_gamma_crossed}
 \begin{split}
 \tilde{\Gamma}_{n+1,n}(f\otimes 1_{2^{nd}} \otimes 1_{C(\mathbb{T}^d)} 
 ,g\otimes 
 1_{C(\mathbb{T}^d)})=&\Bigg(\bigg(\sum_{z\in \bZ_{2^n}^d}f(x_n)\otimes 
 E_{z,z}^{n+1}+ g(y_n)\otimes E_{\bigstar,\bigstar}^{n+1} + \\ 
 &\sum_{z=1}^{d(n+1)}f\circ P^n_z\otimes E_{z,z}^{n+1}\bigg)\otimes 
 1_{2^{nd+d}},\\
 & \bigg(\sum_{z\in \bZ_{2^n}^d}f(x_n)\otimes E_{z,z}^{n+1}+\sum_{z=d'(n+1)+1}^{d(n+1)} g(y_n)\otimes E_{z,z}^{n+1}+\\
 &  g(y_n)\otimes E_{\bigstar,\bigstar}^{n+1}+\sum_{z=1}^{d'(n+1)}g\circ 
 Q_{z}^n\otimes E_{z,z}^{n+1} \bigg)\otimes 1_{C(\mathbb{T}^d)}\Bigg).
 \end{split}
 \end{equation}

 Before we prove the main result of this section, we fix some notation.
 
 \begin{ntn}\label{rem_sec4}
For $n\geq 0$, define a projection \[\tilde{q}_n\in M_{2h'_ns'(n)}\left(C\left( 
(S^2)^{h'_ns'(n)},M_{r(n)} \right)\right)\otimes C(\mathbb{T}^d)\] as follows: 
for $i\in\{1,2,\hdots h_n's'(n)\}$, let 
$T_i:\left(S^2\right)^{h'_ns'(n)}\rightarrow S^2$ be the $i$-th coordinate 
projection. Let $e\in M_2(\mathbb{C})$ be a trivial rank one projection. 

Define 
\[\tilde{q}_n:=\text{diag}(b\circ T_1,b\circ T_2,\hdots, b\circ T_{h'_ns'(n)}, 
\underbrace{e,e,e,\hdots e}_{h'_ns'(n)\left(r(n)-1\right)})\otimes 
1_{C(\mathbb{T}^d)}.\] 

Then, $\tilde{q}_n|_{(S^2)^{h'_ns'(n)}\times \{1\}}$ is 
a direct sum of a projection of rank $h'_ns'(n)$ onto the section space of the 
Cartesian product $L^{\times h'_ns'(n)}$ and a trivial bundle of rank 
$h'_ns'(n)r(n)-h'_ns'(n)$. Set
\[\tilde{p}_n\in M_{2h'_ns'(n)}\left(C\left( 
(S^2)^{h_ns(n)},M_{r(n)}\right)\right)\otimes M_{2^{dn}}\otimes 
C(\mathbb{T}^d)\] 
to be 
\[\tilde{p}_n:=\text{diag}(\underbrace{e,e,e,\hdots,e}_{h'_ns'(n)r(n)})\otimes1_{2^{dn}}\otimes
 1_{C(\mathbb{T}^d)}.\]
 
For $m\geq n\geq 0$, set
 \[\left(\text{id}_{M_{2h'_ns'(n)}}\otimes 
 \tilde{\Gamma}_{m,n}\right)(\tilde{p}_n,\tilde{q}_n)=(\tilde{p}_{m,n},\tilde{q}_{m,n}).\]
  (For $n\geq 0$, $\tilde{p}_{n,n}=\tilde{p}_n$ and 
 $\tilde{q}_{n,n}=\tilde{q}_n$.) Using (\ref{eq_gamma_crossed}), we have a 
 decomposition
\[\tilde{q}_{m,n}=\left(\tilde{q}_{m,n}^{(1)}\oplus 
\tilde{q}_{m,n}^{(2)}\right) \otimes 1_{C(\mathbb{T}^d)},\]
where $\tilde{q}_{m,n}^{(1)}$ is a 
projection of rank $h'_ns'(m)$ onto the section space of the Cartesian product 
$L^{\times h'_ns'(m)}$ and $\tilde{q}_{m,n}^{(2)}$ is a trivial bundle of rank 
$h'_ns'(n)r(m)-h'_ns'(m)$.
 \end{ntn}
 
 \begin{lem}\label{lem_sec4_rankbound}
 Let $n\in\{0,1,2,\ldots\}$ be fixed. Let $m\geq n$. Let $e=\left(e_1\otimes 1_{C(\mathbb{T}^d)}, 
 e_2\otimes 1_{C(\mathbb{T}^d)}\right)$ be a projection in 
 \[M_\infty\left(M_{2^{md}r(m)}C(X_m)\otimes C(\mathbb{T}^d)\oplus 
 M_{r(m)}C(Y_m)\otimes C(\mathbb{T}^d)\right)\] such that $e_2$ is trivial. If 
  there exists \[x\in M_\infty\left(M_{2^{md}r(n)}C(X_m)\otimes 
 C(\mathbb{T}^d)\oplus M_{r(m)}C(Y_m)\otimes C(\mathbb{T}^d)\right)\] such that 
 $\Vert xex^*-(\tilde{p}_{m,n},\tilde{q}_{m,n})\Vert<\frac{1}{2}$, then $\text{rank}(e_2)\geq h'_ns'(n)r(m)+h'_ns'(m).$
 \end{lem}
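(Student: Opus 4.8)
The plan is to follow the proof of Lemma~\ref{lem_sec5_rankbound_} almost verbatim, with two modifications. First, in $(\tilde{p}_{m,n},\tilde{q}_{m,n})$ the Bott-type bundle now lives in the second summand, namely in the image of $\widetilde{B_m}\cong M_{r(m)}(C(Y_m))\otimes C(\mathbb{T}^d)$, so all compressions are carried out there and it is $e_2$, rather than $e_1$, whose rank we bound from below. Second, there is an extra tensor factor $C(\mathbb{T}^d)$, which I would eliminate at the very start by restricting to the single point $1\in\mathbb{T}^d$; this is harmless because both $e$ and $(\tilde{p}_{m,n},\tilde{q}_{m,n})$ are of the form $(\,\cdot\,)\otimes 1_{C(\mathbb{T}^d)}$, and evaluating at a point of $\mathbb{T}^d$ preserves the ranks of the relevant trivial bundles.

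Concretely, I would first restrict the hypothesis $\Vert xex^*-(\tilde{p}_{m,n},\tilde{q}_{m,n})\Vert<\tfrac{1}{2}$ to $Y_m\times\{1\}$ and compress by $\tilde{q}_{m,n}$ in the second coordinate, obtaining
\[
\bigl\Vert \tilde{q}_{m,n}|_{Y_m\times\{1\}}\bigl(xex^*|_{Y_m\times\{1\}}\bigr)\tilde{q}_{m,n}|_{Y_m\times\{1\}}-\tilde{q}_{m,n}|_{Y_m\times\{1\}}\bigr\Vert<\tfrac{1}{2}.
\]
Writing $Y_m\cong (S^2)^{h'_ns'(m)}\times (S^2)^{(h'_m-h'_n)s'(m)}$ and fixing a point $w_0$ in the second factor, I set $Y_{m,n}=(S^2)^{h'_ns'(m)}\times\{w_0\}$ and further restrict to $Y_{m,n}\times\{1\}$. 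Since $\tilde{q}_{m,n}$ and $e$ are projections, the resulting inequality yields the Cuntz subequivalence $\tilde{q}_{m,n}|_{Y_{m,n}}\lesssim e_2|_{Y_{m,n}}$, the relevant block of $e$ being the trivial second coordinate $e_2$.

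From here the argument is purely bundle-theoretic and mirrors that of Lemma~\ref{lem_sec5_rankbound_}. I would produce a projection $q$ with $\tilde{q}_{m,n}|_{Y_{m,n}}+q\approx e_2|_{Y_{m,n}}$, then choose a trivial projection $e_0\perp\tilde{q}_{m,n}^{(2)}|_{Y_{m,n}}$ of rank $\mathrm{rank}(e_2|_{Y_{m,n}})-\mathrm{rank}(\tilde{q}_{m,n}^{(2)}|_{Y_{m,n}})$ so that $\tilde{q}_{m,n}^{(2)}|_{Y_{m,n}}+e_0\approx e_2|_{Y_{m,n}}$, and combine these to get $\tilde{q}_{m,n}^{(1)}|_{Y_{m,n}}+\tilde{q}_{m,n}^{(2)}|_{Y_{m,n}}+q\approx\tilde{q}_{m,n}^{(2)}|_{Y_{m,n}}+e_0$. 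Cancellation (\cite[Theorem 9.1.5]{HusB}), together with $\mathrm{rank}(\tilde{q}_{m,n}^{(1)}|_{Y_{m,n}}+q)\geq h'_ns'(m)$, gives $\tilde{q}_{m,n}^{(1)}|_{Y_{m,n}}\lessapprox e_0$; since $\tilde{q}_{m,n}^{(1)}$ is the section space of $L^{\times h'_ns'(m)}$, Remark~\ref{rem} forces $\mathrm{rank}(e_0)\geq 2h'_ns'(m)$. Adding the rank of the trivial part $\tilde{q}_{m,n}^{(2)}$ then yields
\[
\mathrm{rank}(e_2)\geq 2h'_ns'(m)+\bigl(h'_ns'(n)r(m)-h'_ns'(m)\bigr)=h'_ns'(n)r(m)+h'_ns'(m),
\]
as claimed.

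The only genuinely new point relative to Lemma~\ref{lem_sec5_rankbound_} is the $C(\mathbb{T}^d)$ factor, and the main thing to check carefully is that the restriction to $1\in\mathbb{T}^d$ is legitimate: one verifies that $x|_{Y_m\times\{1\}}$ still witnesses the compression inequality and that the ranks of the trivial bundles $e_2$ and $\tilde{q}_{m,n}^{(2)}$ are computed correctly after the restriction, using the explicit form of $\tilde{\Gamma}_{m,n}$ from (\ref{eq_gamma_crossed}) and of $\tilde{p}_n,\tilde{q}_n$ from Notation~\ref{rem_sec4}. I do not anticipate any real obstacle here; once the $C(\mathbb{T}^d)$ factor is frozen, the estimate reduces to exactly the one already established in Lemma~\ref{lem_sec5_rankbound_}, with the data $(h_n,s,X)$ replaced by $(h'_n,s',Y)$.
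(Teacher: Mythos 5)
Your proposal matches the paper's proof: the paper likewise freezes the $C(\mathbb{T}^d)$ factor by restricting to $Y_m\times\{1\}$, compresses by $\tilde{q}_{m,n}=\tilde{q}_{m,n}^{(1)}\oplus\tilde{q}_{m,n}^{(2)}$ to get $\tilde{q}_{m,n}^{(1)}\oplus\tilde{q}_{m,n}^{(2)}\lesssim e_2$, and then declares the remainder identical to Lemma~\ref{lem_sec5_rankbound_} --- exactly the bundle-theoretic cancellation argument you spell out with $Y_{m,n}$, Remark~\ref{rem}, and \cite[Theorem 9.1.5]{HusB}. No gaps; the rank bookkeeping with $(h'_n,s',Y)$ in place of $(h_n,s,X)$ is correct.
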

 
 \begin{proof}
 Note that $\Vert xex^*-(\tilde{p}_{m,n},\tilde{q}_{m,n})\Vert<\frac{1}{2}$ implies that 
 \begin{equation*}
 \left\Vert \tilde{q}_{m,n}\left(xex^*|_{ Y_n\times \mathbb{T}^d}\right)\tilde{q}_{m,n}-\tilde{q}_{m,n}\right\Vert<\frac{1}{2}.
 \end{equation*}
 
 From this we conclude that 
 \begin{equation}
 \left\Vert (\tilde{q}_{m,n}^{(1)}\oplus \tilde{q}_{m,n}^{(2)})\left(xex^*|_{ 
 Y_n\times \{1\}}\right)(\tilde{q}_{m,n}^{(1)}\oplus 
 \tilde{q}_{m,n}^{(2)})-(\tilde{q}_{m,n}^{(1)}\oplus 
 \tilde{q}_{m,n}^{(2)})\right\Vert<\frac{1}{2}.
 \end{equation}

Therefore
 \[(\tilde{q}_{m,n}^{(1)}\oplus \tilde{q}_{m,n}^{(2)})\lesssim e|_{Y_m\times 
 1}=e_2.\]
   
   Recall from Notation~\ref{rem_sec4} that $\tilde{q}_{m,n}^{(1)}$ is a 
   projection of rank $h'_ns'(m)$ whose range is the section space of 
   $L^{\times h'_ns'(m)}$ and $\tilde{q}_{m,n}^{(2)}$ is a trivial projection of 
   rank $h'_ns'(n)r(m)-h'_ns'(m)$. The rest of the proof is identical to that of
   Lemma \ref{lem_sec5_rankbound_}.
 \end{proof}
 \begin{prp}\label{rad_crossedproduct}
 Assume the notation and choices in Construction~\ref{OurConstruction_zd}.
Then $\rc (A\rtimes_{\alpha}\bZ^d)=r'$. 

\end{prp}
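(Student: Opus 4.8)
The plan is to prove the two inequalities $\rc(A\rtimes_{\alpha}\bZ^d)\le r'$ and $\rc(A\rtimes_{\alpha}\bZ^d)\ge r'$ separately, using the identification of $A\rtimes_{\alpha}\bZ^d$ with the AH direct limit of the blocks $\widetilde{A_n}=\widetilde{C_n}\oplus\widetilde{B_n}$, where $\widetilde{C_n}\cong M_{2^{nd}r(n)}\big(C(X_n\times\mathbb T^d)\big)$ and $\widetilde{B_n}\cong M_{r(n)}\big(C(Y_n\times\mathbb T^d)\big)$, with connecting maps $\tilde{\Gamma}_{n+1,n}$ given by \eqref{eq_gamma_crossed}. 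The whole argument runs parallel to the computation of $\rc(A)$ in Proposition~\ref{rad_algebra}, except that the roles of the two summands are interchanged: after crossing with $\bZ^d$ the obstruction moves from the $X$-summand to the $Y$-summand.

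For the upper bound I would estimate each block using Proposition~\ref{prop_roc} and \cite[Corollary 1.2]{EN13}. The extra matrix factor $M_{2^{nd}}$ absorbed by the $X$-part gives $\rc(\widetilde{C_n})\le\frac{2h_ns(n)+d}{2\cdot 2^{nd}r(n)}$, and the factor $2^{nd}$ in the denominator forces $\rc(\widetilde{C_n})\to 0$ in every case of Construction~\ref{OurConstruction_zd}. The $Y$-part has no such amplification, so $\rc(\widetilde{B_n})\le\frac{2h'_ns'(n)+d}{2r(n)}$, and $\lim_n\frac{h'_ns'(n)}{r(n)}=r'$ by Lemma~\ref{seq_lem_zd}\ref{fourth_sq_lem} (interpreted as $\infty$ when $r'=\infty$). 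Hence $\rc(\widetilde{A_n})=\max\{\rc(\widetilde{C_n}),\rc(\widetilde{B_n})\}\to r'$, and \cite[Proposition 3.2.3 and 3.2.4(iii)]{blackadar2012algebraic} yields $\rc(A\rtimes_{\alpha}\bZ^d)\le\liminf_n\rc(\widetilde{A_n})=r'$.

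For the lower bound I would follow the proof of Proposition~\ref{rad_algebra} almost verbatim, now comparing the class of $(\tilde{p}_{m,n},\tilde{q}_{m,n})$ of Notation~\ref{rem_sec4}, whose nontrivial line-bundle part lies in the $Y$-summand, against a trivial projection $e$, with Lemma~\ref{lem_sec4_rankbound} replacing Lemma~\ref{lem_sec5_rankbound_} as the obstruction to $(\tilde{p},\tilde{q})\lesssim e$. The one genuinely new feature is that the maps \eqref{eq_gamma_crossed} couple the ranks of the two summands through the interior point evaluations $f(x_n)$ and $g(y_n)$. I would neutralize this by taking $e=\big(f\otimes 1_{2^{nd}}\otimes 1_{C(\mathbb T^d)},\,g\otimes 1_{C(\mathbb T^d)}\big)$ with $f\in M_{r(n)}(C(X_n))$ and $g\in M_{r(n)}(C(Y_n))$ \emph{trivial of the same rank} $\mu$. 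Using the identity $2^{nd}+d(n+1)+1=l(n+1)$, a direct computation with \eqref{eq_gamma_crossed} shows that both summand ranks are multiplied by $l(n+1)$ at each step; hence $\tilde{\Gamma}_{m,n}(e)$ keeps equal ranks in the two summands, its $Y$-component has rank exactly $\mu\,r(m)/r(n)$, and $d_\tau(\tilde{\Gamma}_{m,n}(e))=\mu/r(n)$ for every trace $\tau$. The same balanced structure makes $d_\tau(\tilde{p}_{m,n},\tilde{q}_{m,n})=h'_ns'(n)$ for every $\tau$, so the bookkeeping of Proposition~\ref{rad_algebra} is restored. For $0<\rho<r'$, choosing (with base level $0$) $n$ and $\mu$ so that $1+\rho/h'_0<\frac{\mu}{h'_0 r(n)}<1+\kappa'$, which is possible because $\rho/h'_0<\kappa'=r'/h'_0$, gives $d_\tau(\tilde{\Gamma}_{m,n}(e))>h'_0+\rho=d_\tau(\tilde{p}_{m,0},\tilde{q}_{m,0})+\rho$ for all $\tau$, while at the same time $\text{rank}\big(\tilde{\Gamma}_{m,n}(e)_2\big)=\mu\,r(m)/r(n)<h'_0s'(0)r(m)+h'_0s'(m)$ for every $m>n$, contradicting Lemma~\ref{lem_sec4_rankbound}; the case $r'=\infty$ is handled exactly as the $r=\infty$ case of Proposition~\ref{rad_algebra} using $c'=\inf_{m}s'(m)/r(m)>0$.

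I expect the main obstacle to be this coupling of the two summands. One must check simultaneously that the balanced choice of $e$ keeps the two normalized ranks equal for all $m$, so that the inequality $d_\tau(b)>d_\tau(a)+\rho$ holds uniformly over the entire trace simplex rather than only on traces concentrated on a single summand, and that the $Y$-component rank nevertheless stays below the threshold $h'_ns'(n)r(m)+h'_ns'(m)$ demanded by Lemma~\ref{lem_sec4_rankbound}. Once the cancellation $2^{nd}+d(n+1)+1=l(n+1)$ is observed, the remaining estimates are routine adaptations of Proposition~\ref{rad_algebra}.
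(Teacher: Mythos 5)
Your proposal is correct and follows essentially the same route as the paper's proof: the same block estimates via Proposition~\ref{prop_roc} and \cite[Corollary 1.2]{EN13} for the upper bound, and for the lower bound the same comparison of $(\tilde{p}_{m,n},\tilde{q}_{m,n})$ against a trivial projection with ranks $2^{nd}M$ and $M$ in the two summands (your ``balanced'' choice), so that every trace evaluates to $M/r(n)$, contradicted by Lemma~\ref{lem_sec4_rankbound}. The only cosmetic discrepancy is your statement that both summand ranks are multiplied by $l(n+1)$ at each step --- the $X$-summand rank is multiplied by $2^d l(n+1)$, matching the growth of the matrix size --- but this does not affect the normalized traces, which is all that is used.
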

\begin{proof}
We first show that $\rc(A\rtimes_\alpha\bZ)\leq r'$. Using Proposition \ref{prop_roc} and \cite[Corollary 1.2]{EN13}, we see 
that, for $n\in\{0,1,2,\ldots\}$ 
\begin{equation*}
\begin{split}
\rc \left(M_{r(n)}\left(C(X_n)\right)\otimes M_{2^{nd}}\otimes C(\mathbb{T}^d) \right)&=\frac{1}{{2^{nd}}r(n)}\rc \left(C(X_n\times \mathbb{T}^d)\right)\\
&\leq \frac{\dim(X_n\times \mathbb{T}^d)}{{2^{dn+1}}r(n)}\\ &=\frac{2h_ns(n)+d}{{2^{dn+1}}r(n)}\\
&=\frac{h_ns(n)}{{2^{nd}}r(n)}+\frac{d}{{2^{dn+1}}r(n)},
\end{split}
\end{equation*}
and
\begin{equation*}
	\begin{split}
\rc \left(M_{r(n)}C(Y_n)\otimes C(\mathbb{T}^d)\right)&=\frac{1}{r(n)}\rc \left(C(Y_n\times \mathbb{T}^d)\right)\\
&\leq \frac{\dim(Y_n)+d}{2r(n)}\\ &=\frac{h'_ns'(n)}{r(n)}+\frac{d}{2r(n)}.
\end{split}
\end{equation*}

$A\rtimes_\alpha\bZ^d$ is stably finite and simple by Proposition~\ref{L_af_proper_zd}.  By Proposition 3.2.3 and 3.2.4(iii)  of \cite{blackadar2012algebraic}, since 
\[\lim_{n\to\infty}\frac{h_n}{2^{dn}}=0,\]
 we get
\begin{equation*}
\begin{split}
\rc (A\rtimes_\alpha\bZ^d)& \leq \lim_{n\to\infty}\inf\left(\max\left\{\frac{h_ns(n)}{{2^{dn}}r(n)}+\frac{d}{{2^{dn+1}}r(n)},\frac{h'_ns'(n)}{r(n)}+\frac{d}{2r(n)}\right\}\right)\\
&= \lim_{n\to\infty}\inf \left(\frac{h'_ns'(n)}{r(n)}+\frac{d}{2r(n)}\right)\\
&=\lim_{n\to\infty}\frac{h'_ns'(n)}{r(n)}=r'.
\end{split}
\end{equation*}

Now, let us show that $r'\leq \rc (A\rtimes_\alpha\bZ^d)$.  We first consider the case  $0<r' <\infty$.
Fix $0<\rho<r'$. We claim that $A\rtimes_\alpha\bZ^d$ does not have 
$\rho$-comparison.  Note that $0<\frac{\rho}{h'_0}<\kappa'$.
Choose $n\in\bN$ such that 
\[\frac{1}{h'_0r(n)}<\kappa'-\frac{\rho}{h'_0}.\]

 Choose $M\in\bN$ such 
that 
\begin{equation}\label{equ_inequality_sec5_part2_}
	\frac{\rho}{h'_0}+1<\frac{M}{h'_0r(n)}<\kappa'+1.
\end{equation}

 Denote 
 \[\widetilde{A_n}=M_{2^{dn}r(n)}C(X_n)\otimes C(\mathbb{T}^d)\oplus M_{r(n)}C(Y_n)\otimes C(\mathbb{T}^d).\]
 
  Let 
  \begin{equation}\label{proj_e}
  e=\left(e_1\otimes 1_{C(\mathbb{T}^d)}, e_2\otimes 1_{C(\mathbb{T}^d)}\right)
  \end{equation}
  be a projection in $M_\infty(\widetilde{A_n})$ such that $e_1$ is a trivial 
  projection of rank ${2^{nd}}M$ and $e_2$ is a trivial projection of rank $M$. 
  By a slight abuse of notation, we use $\tilde{\Gamma}_{m,n}$ to denote the 
  amplified map from $M_\infty (\widetilde{A_n})$ to $M_\infty 
  (\widetilde{A}_m)$. For $m>n$, let 
  \[\tilde{\Gamma}_{m,n}(e)=\left(\tilde{e}_1\otimes 1_{C(\mathbb{T}^d)}, \tilde{e}_2
  \otimes 1_{C(\mathbb{T}^d)}\right).\]
  
   Then, from Equation \ref{eq_gamma_crossed}, we can deduce that the rank of 
   $\tilde{e}_1$ is $\frac{Mr(m){2^{md}}}{r(n)}$, and the rank of $\tilde{e}_2$ 
   is $\frac{Mr(m)}{r(n)}$, which is strictly less than $h'_0s'(m)+h'_0r(m)$ by 
   Equation \ref{equ_inequality_sec5_part2_}. Now, if $\tau$ is a trace on 
   $\widetilde{A_m}$ then there exists $0<\lambda<1$ such that
 \[
 \tau(\tilde{\Gamma}_{m,n}(e))=\lambda\text{tr}_1(\tilde{e}_1)+ \left(1-\lambda\right)\text{tr}_2(\tilde{e}_2),
 \]
 where $\text{tr}_1$ is a normalized trace on $M_{{2^{dm}}r(m)}$ and $\text{tr}_2$ is a normalized trace on $M_{r(m)}$. 
 
  Hence,
  \begin{equation}
  	\begin{split}
  		\tau(\tilde{\Gamma}_{m,n}(e))&=\lambda\cdot \frac{1}{{2^{dm}}r(m)}\cdot \frac{Mr(m){2^{dm}}}{r(n)}+ (1-\lambda)\cdot \frac{1}{r(m)}\cdot\frac{Mr(m)}{r(n)}\\
  		&= \frac{M}{r(n)}>\rho +h'_0=\rho + \tau\left(\tilde{p}_{m,0},\tilde{q}_{m,0}\right).
  	\end{split}
  \end{equation}
  
On the other hand, if $\tilde{\Gamma}_{\infty,0}(\tilde{p}_0,\tilde{q}_0)\lesssim \tilde{\Gamma}_{\infty,n}(e)$ then, there exists some $m>n$ and $x\in M_{\infty}(\widetilde{A}_m)$ such that 
\[
\Vert x\tilde{\Gamma}_{m,n}(e)x^*-\left(\tilde{p}_{m,0},\tilde{q}_{m,0}\right) \Vert<\frac{1}{2}.
\]

By Lemma~\ref{lem_sec4_rankbound}, the above inequality implies that 

\[\text{rank}(\tilde{e}_2)\geq h'_0r(m)+h'_0s'(m),\] which is a contradiction by Equation \ref{equ_inequality_sec5_part2_}. 

Finally, we consider the case when $r'=r=\infty$ and show that $\rc(A\rtimes_\alpha\bZ^d)=\infty$. For this, we show that $\rc(A\rtimes_\alpha\bZ^d)$ cannot have $\rho$-comparison for any $\rho\in (0,\infty)$.
 Since $\lim_{n\rightarrow\infty}h'_n=\infty$, choose $n\in\bN$ such that 
\begin{equation}\label{equ_inequality_sec6_1_}
	\begin{split}
		h_n'&>\frac{\rho}{c},\\
		0<\frac{1}{r(n)}&<c\cdot h_n'-\rho.
	\end{split}
\end{equation}

Choose $M\in\bN$ such that 
\begin{equation}\label{equ_inequality_sec6_}
	\rho+h'_ns'(n)<\frac{M}{r(n)}<c\cdot h_n'+h'_ns'(n).
\end{equation}

Let $e=\left(e_1\otimes 1_{C(\mathbb{T}^d)}, e_2\otimes 1_{C(\mathbb{T}^d)}\right)\in M_\infty(\widetilde{A_n})$ be a projection defined by Equation~\ref{proj_e}. For $m\geq n$, put 
\[\tilde{\Gamma}_{m,n}(e)=\left(\tilde{e}_1\otimes 1_{C(\mathbb{T}^d)}, \tilde{e}_2
  \otimes 1_{C(\mathbb{T}^d)}\right).\]

Then,
 \[\text{rank}(\tilde{e}_2)=\frac{Mr(m)}{r(n)},\] which by the choice of $M$, 
 is strictly less than $r(m)ch_n'+h'_ns'(n)r(m)$. Because 
\[c=\inf_{m\geq 0}\frac{s'(m)}{r(m)} ,\]
we have
  \[h'_n c r(m)\leq h'_n\frac{s'(m)}{r(m)}r(m)=h'_ns'(m).\] 
  
  Hence, for $m\geq n$, we have 
 \begin{equation}\label{equ_rank_e_2}
 \text{rank}(\tilde{e}_2)<h'_ns'(m)+h'_ns'(n)r(m).
 \end{equation}
  Now, for any trace $\tau$ on $\widetilde{A}_m$, we have
  \[\tau(\left(\tilde{p}_{m,n}, \tilde{q}_{m,n}\right))=\frac{h'_ns'(n)r(m)}{r(m)}=h'_ns'(n).\] Moreover, by Inequality \ref{equ_inequality_sec6_}, we get
\begin{equation*}
	\begin{split}
		\tau\left(\tilde{\Gamma}_{m,n}(e)\right)=\frac{M}{r(n)}>h'_ns'(n)+\rho=\tau(\left(\tilde{•}{p}_{m,n}, \tilde{q}_{m,n}\right))+\rho.
	\end{split}
\end{equation*}
If \[\tilde{\Gamma}_{\infty,n}(\left(\tilde{p}_{m,n}, 
\tilde{q}_{m,n}\right))\lesssim \tilde{\Gamma}_{\infty,n}(e),\]   
then there exists some $m>n$ and $x\in M_{\infty}(\widetilde{A_m})$ such that 
\[
\Vert x\tilde{\Gamma}_{m,n}(e)x^*-\left(\tilde{p}_{m,n},\tilde{q}_{m,n}\right) \Vert<\frac{1}{2}.
\]

By Lemma~\ref{lem_sec4_rankbound}, the above inequality implies that 

\[\text{rank}(\tilde{e}_2)\geq h'_ns'(n)r(m)+h'_ns'(m),\] which is a contradiction by Equation \ref{equ_rank_e_2}.

\end{proof}

\bibliographystyle{amsplain}
\bibliography{rc_bibtex}  

\end{document}